\newcommand{\pkg}[1]{\textsf{#1}}
\newcommand{\e}{\mathrm{e}}
\newcommand{\de}{\mathrm{d}}
\newtheorem{corollary}{Corollary}
\newtheorem{theorem}{Theorem}
\newtheorem{proposition}{Proposition}
\definecolor{wildstrawberry}{rgb}{1.0, 0.26, 0.64}
\definecolor{ao(english)}{rgb}{0.0, 0.5, 0.0}
\begin{document}

\title{An age-structured SVEAIR epidemiological model} 

\author[1]{Vasiliki Bitsouni\,\orcidlink{0000-0002-0684-0583}\thanks{\texttt{vbitsouni@math.upatras.gr}}}

\author[2,3]{Nikolaos Gialelis\,\orcidlink{0000-0002-6465-7242}\thanks{\texttt{ngialelis@math.uoa.gr}}}

\author[1]{Vasilis Tsilidis\,\orcidlink{0000-0001-5868-4984}\thanks{\texttt{vtsilidis@upatras.gr}}}

\affil[1]{Department of Mathematics, University of Patras, GR-26504 Rio Patras, Greece}

\affil[2]{Department of Mathematics, National and Kapodistrian University of Athens, GR-15784 Athens, Greece}

\affil[3]{School of Medicine, National and Kapodistrian University of Athens,\newline GR-11527 Athens, Greece}

\date{}

\maketitle

\begin{abstract}
\noindent
In this paper, we introduce and study an age-structured epidemiological compartment model and its respective problem, applied but not limited to the COVID-19 pandemic, in order to investigate the role of the age of the individuals in the evolution of epidemiological phenomena. We investigate the well-posedness of the model, as well as the global dynamics of it in the sense of basic reproduction number via constructing Lyapunov functions. 
\end{abstract}

\noindent
\textbf{Keywords:} Age-based epidemiological model, Basic reproductive number,  Asymptomatic infectious, Stability analysis, Global stability, Lyapunov function\newline\\
\noindent
\textbf{MSC2020:} 35B35, 35Q92, 37N25, 92D30 



\section{Introduction}
\label{intro}

Epidemiological mathematical models have played a crucial role in understanding and predicting the spread of infectious diseases, as well as informing public health policies and measures (see \cite{ianpu,iannelli2017basic,sharpe1911problem,m1925applications,xiang2021covid} and many references therein). With the emergence of the COVID-19 pandemic, the importance of these models has been highlighted as they have been used to assess the anticipated spread of the virus and inform strategies to mitigate its impact \cite{adiga2020mathematical,jewell2020predictive,watson2022global}.

One of the key aspects of modern epidemiological models is the incorporation of age-structured models, which take into account the differences in susceptibility, transmission, and disease progression across various age groups (such as \cite{iannelli2017basic,inaba1990threshold}). This approach allows for a more accurate representation of disease dynamics and enables better targeting of interventions and resource allocation.

The aim of the present paper is to investigate the role of the age of the individuals in the evolution of epidemiological phenomena. Using as a case study the COVID-19 outbreak we aim to address the following questions: 
\begin{itemize}
\item[--] How does the age of individuals affect the spread of the epidemic? 
\item[--] What is the effect of the asymptomatic infectious individuals on the basic reproduction number, $\mathcal{R}_0$, of COVID-19?
\end{itemize}

We answer the above questions  by deriving an age-structured epidemiological compartment model  that incorporates the important role of both asymptomatic and symptomatic individuals.

This study is organized as follows.  In \hyperref[sec:model]{\S \ref*{sec:model}}, we develop a novel age-structured SVEAIR model that incorporates, among others, the ambiguous (see \hyperref[sec:derivanal]{\S \ref*{sec:derivanal}}) variable of asymptomaticity of infectious individuals for the spread of COVID-19 disease. We show its global well-posedness, we derive the basic reproductive number, $\mathcal{R}_0$, of the model and we study the global stability of its steady states. In \hyperref[sec:numerics]{\S \ref*{sec:numerics}}, we undertake numerical simulations to confirm the behaviour of the solution of the problem.  We conclude in  \hyperref[sec:CD]{\S \ref*{sec:CD}} with a summary and discussion of the results. 

\section{The epidemiological model}
\label{sec:model} 

Here we introduce an epidemiological model, $\mathscr{M}$, along with the respective problem, $\mathscr{P}$, as a means of utilization of the proposed scheme in answering the main question of the present paper. 

\subsection{Derivation and analysis of the model}
\label{sec:derivanal} 

One of the most critical facts about COVID-19, is that a significant number of cases, mainly those of young age, has been reported as asymptomatic (see \cite{gao2021systematic} and many references therein), leading to fast spread of the infection. Although the asymptomatic cases have a shorter duration of viral shedding and lower viral load \cite{li2020substantial,Yang2020}, their proportion can range from 4\%-90\% (see \cite{heneghancovid,sah2021asymptomatic} and many references therein) and most of the time they play a key role in infection transmission. Therefore, we incorporate not only both symptomatic and asymptomatic cases in our model (as it is done in, e.g., \cite{bitsgialstrcovid2021}), but also the age of the infected/infectious individuals. 

In particular, the proposed $\mathscr{M}$ is based on the following hypotheses. 
\begin{enumerate}
\item The \textit{total population}, $N$, is classified into six non-negative-valued compartments, \textit{susceptible}, $S$, \textit{vaccinated-with-a-prophylactic-vaccine}, $V$, \textit{latent/exposed}, $E$, \textit{asymptomatic infectious}, $A$, \textit{symptomatic infectious}, $I$, and \textit{recovered/removed}, $R$, \textit{individuals}, thus $$N=S+V+E+A+I+R.$$ All of the above epidemiological variables depend on non-negative \textit{time}, $t$. 
\item 
\begin{enumerate}[label=\roman*.]
\item There is also another independent non-negative \textquote{\textit{age}}-variable, $\theta$, which measures the time elapsed since, e.g., birth or infection. The two time-variables have different scales, i.e they are measured in different units, and the parameter $\omega\in\mathbb{R}^+$ stands for the \textit{conversion factor} from the units of $\theta$ to the units of $t$. 
\item Only the non-negative-valued \textit{age-densities} of $E$, $A$ and $I$, i.e. $e$, $a$ and $i$, respectively, contribute to our $\mathscr{M}$. Those densities should vanish at (or have already vanished before) $\theta\to\infty$, hence it is natural for them to be considered as elements of $L^1{\left(\mathbb{R}_0^+\right)}$, for every fixed $t$. In the light of the above assumption, the expressions $$E=\int\limits_0^\infty{e{\left(\,\cdot\,,\theta\right)}\,\de\theta},\text{ }A=\int\limits_0^\infty{a{\left(\,\cdot\,,\theta\right)}\,\de\theta}\text{ and }I=\int\limits_0^\infty{i{\left(\,\cdot\,,\theta\right)}\,\de\theta}$$ are well-posed. 
\end{enumerate}
\item 
\begin{enumerate}[label=\roman*.] 
\item The vaccine is considered to be \textit{purely prophylactic}. 
\item Only a part of population is vaccinated and $p\in\left[0,1\right]$ stands for the \textit{vaccine coverage}. Since the vaccine is supposed to be purely prophylactic, the only source for the vaccinees concerns the pool of the susceptible individuals. That source is considered to be linear. 
\item The vaccine is likely to be imperfect (at providing prophylaxis) and $\epsilon\in\left[0,1\right]$ stands for its \textit{effectiveness}. 
\item The \textit{vaccine-induced immunity}, i.e. the process of vaccinees obtaining immunity and moving into recovered population, is considered to be linear and the letter $\zeta\in\mathbb{R}_0^+$ is employed for the vaccine-induced immunity rate. 
\end{enumerate}
\item The \textit{transmission}, i.e. the process of susceptible individuals and failed-to-be-immune vaccinees becoming latent, is considered to be exclusively \textit{horizontal} and to be governed by the \textit{Holling-type-I functional response}. The parameters $\beta_A,\beta_I\,\in L^\infty{\left(\mathbb{R}_0^+;\mathbb{R}_0^+\right)}$ stand for the transmission rates of asymptomatic and symptomatic, respectively, infectious individuals. 
\item The \textit{incubation}, i.e. the process of latent individuals becoming infectious, is considered to be linear and $k\in L^\infty{\left(\mathbb{R}_0^+;\mathbb{R}_0^+\right)}$ is the incubation rate. That rate is the same for both asymptomatic and symptomatic classes, but those classes are different from each other in terms of magnitude of their sources. In particular, $q\in L^\infty{\left(\mathbb{R}_0^+;\left[0,1\right]\right)}$ stands for the proportion of latent individuals that become asymptomatic infectious ones. 
\item The \textit{recovery}, i.e. the process of infectious individuals moving into the recovered population, is considered to be linear and $\gamma_A,\gamma_I\,\in L^\infty{\left(\mathbb{R}_0^+;\mathbb{R}_0^+\right)}$ stand for the recovery rates of asymptomatic and symptomatic, respectively, infectious ones. 
\item 
\begin{enumerate}[label=\roman*.]
\item Some of the asymptomatic infectious individuals never develop symptoms and they move directly into the recovered/removed class and the letter $\xi\in L^\infty{\left(\mathbb{R}_0^+;\left[0,1\right]\right)}$ is employed for the proportion of those asymptomatic infectious individuals. 
\item The \textit{symptomatic transition}, i.e. the process of asymptomatic infectious individuals turning into symptomatic ones, is considered to be linear and $\chi\in L^\infty{\left(\mathbb{R}_0^+;\mathbb{R}_0^+\right)}$ stands for the symptomatic transition rate. 
\end{enumerate}
\item \textit{Demographic terms} are taken into account and they are considered to be linear, with $\mu\in\mathbb{R}^+$ being the universal birth/death rate. We note that $\mu$ is considered to be the only strictly positive constant of $\mathscr{M}$.
\item No \textit{reinfections} are taken into account, hence no movement from the pool of the removed individuals to the pool of the susceptible ones is considered. 
\end{enumerate} 

The respective \textit{initial-boundary value $\mathscr{P}$} has the following form: For given $$\left(S_0,V_0,e_0,a_0,i_0,R_0\right)\in{\left(\mathbb{R}_0^+\right)}^2\times{\left(L^1{\left(\mathbb{R}_0^+;\mathbb{R}_0^+\right)}\right)}^3\times\mathbb{R}_0^+,$$ we search for $T>0$ and smooth enough $$\left(S,V,e,a,i,R\right)\colon\,\left[0,T\right)\to {\left(\mathbb{R}_0^+\right)}^2\times{\left(L^1{\left(\mathbb{R}_0^+;\mathbb{R}_0^+\right)}\right)}^3\times\mathbb{R}_0^+,$$ such that 
\begin{subequations}
\label{SVEIAR-age}
\begin{align}
&\begin{cases}
\dfrac{\de S}{\de t}=\mu N-\left(p+\int\limits_0^\infty{\beta_A{\left(\theta\right)}a{\left(\,\cdot\,,\theta\right)}+\beta_I{\left(\theta\right)}i{\left(\,\cdot\,,\theta\right)}\,\de\theta}+\mu\right)S\\
S{\left(0\right)}=S_0,\label{SVEIAR-age;a}
\end{cases}\\
&\begin{cases}
\dfrac{\de V}{\de t}=pS-\left(\zeta\epsilon+\int\limits_0^\infty{\beta_A{\left(\theta\right)}a{\left(\,\cdot\,,\theta\right)}+\beta_I{\left(\theta\right)}i{\left(\,\cdot\,,\theta\right)}\,\de\theta}\left(1-\epsilon\right)+\mu\right)V\\
V{\left(0\right)}=V_0,\label{SVEIAR-age;b}
\end{cases}\\
&\begin{cases}
\dfrac{\partial e}{\partial t}+\dfrac{1}{\omega}\dfrac{\partial e}{\partial \theta}=-\left(k+\mu\right)e\\
e{\left(\,\cdot\,,0\right)}=\omega\int\limits_0^\infty{\beta_A{\left(\theta\right)}a{\left(\,\cdot\,,\theta\right)}+\beta_I{\left(\theta\right)}i{\left(\,\cdot\,,\theta\right)}\,\de\theta}\left(S+\left(1-\epsilon\right)V\right)\\
e{\left(0,\,\cdot\,\right)}=e_0,\label{SVEIAR-age;c}
\end{cases}\\
&\begin{cases}
\dfrac{\partial a}{\partial t}+\dfrac{1}{\omega}\dfrac{\partial a}{\partial \theta}=-\left(\gamma_A\xi+\chi\left(1-\xi\right)+\mu\right)a\\
a{\left(\,\cdot\,,0\right)}=\omega\int\limits_0^\infty{k{\left(\theta\right)}q{\left(\theta\right)}e{\left(\,\cdot\,,\theta\right)}\,\de\theta}\\
a{\left(0,\,\cdot\,\right)}=a_0,\label{SVEIAR-age;d}
\end{cases}\\
&\begin{cases}
\dfrac{\partial i}{\partial t}+\dfrac{1}{\omega}\dfrac{\partial i}{\partial \theta}=-\left(\gamma_I+\mu\right)i\\
i{\left(\,\cdot\,,0\right)}=\omega\int\limits_0^\infty{k{\left(\theta\right)}\left(1-q{\left(\theta\right)}\right)e{\left(\,\cdot\,,\theta\right)}+\chi{\left(\theta\right)}\left(1-\xi{\left(\theta\right)}\right)a{\left(\,\cdot\,,\theta\right)}\,\de\theta}\\
i{\left(0,\,\cdot\,\right)}=i_0,\label{SVEIAR-age;e}
\end{cases}\\
&\begin{cases}
\dfrac{\de R}{\de t}=\zeta\epsilon V+\int\limits_0^\infty{\gamma_A{\left(\theta\right)}\xi{\left(\theta\right)}a{\left(\,\cdot\,,\theta\right)}+\gamma_I{\left(\theta\right)}i{\left(\,\cdot\,,\theta\right)}\,\de\theta}-\mu R\\
R{\left(0\right)}=R_0.\label{SVEIAR-age;f}
\end{cases}
\end{align}
\end{subequations}

%
%
%
%
%

The dimensional units of all variables and parameters appeared in $\mathscr{P}$ \eqref{SVEIAR-age} are gathered in \hyperref[table-1]{Table \ref*{table-1}}.

\begin{table}[!h]
\centering 
\begin{tabular}{p{2cm}p{11cm}p{2cm}}
\hline
Independent variables  & Description  & Units  \\ [0.5ex]
\hline         
$t$ & Time   & T \\
$\theta$ & Age, i.e. time elapsed since, e.g., birth or infection & $\Theta$ \\  [1ex]      
\hline
\hline
Conversion factor  & Description  & Units  \\ [0.5ex]
\hline
$\omega$ & Conversion factor from the units of $\theta$ to the units of $t$ & T$\,\Theta^{-1}$\\  [1ex] 
\hline
\hline
Dependent variables  & Description  & Units  \\ [0.5ex]
\hline        
$N$ & Number of total population of individuals  & \#  \\
$S$ & Number of susceptible individuals   & \#  \\
$V$ & Number of vaccinated-with-a-prophylactic-vaccine individuals   & \#  \\
$e$ & Age-density of latent/exposed individuals  &  \#$\,\Theta^{-1}$ \\
$E$ & Number of latent/exposed individuals   & \#  \\
$a$ & Age-density of asymptomatic infectious individuals   &  \#$\,\Theta^{-1}$  \\
$A$ & Number of asymptomatic infectious individuals   & \#  \\
$i$ & Age-density of symptomatic infectious   &   \#$\,\Theta^{-1}$ \\
$I$ & Number of symptomatic infectious individuals   & \#  \\
$R$ & Number of recovered/removed individuals   & \#  \\  [1ex]      
\hline
\hline
Parameters & Description  & Units  \\ [0.5ex]
\hline
$N_0$ & Population size & \# \\
$\mu$ & Birth/Death rate   & T$^{-1}$ \\
$\beta_A$ & Transmission rate of asymptomatic infectious individuals & \#$^{-1}\,$T$^{-1}$ \\
$\beta_I$ & Transmission rate of symptomatic infectious individuals & \#$^{-1}\,$T$^{-1}$ \\
$p$ & Vaccination rate &  T$^{-1}$ \\
$\epsilon$ & Vaccine effectiveness  & -  \\
$\zeta$ & Vaccine-induced immunity rate & T$^{-1}$  \\
$k$ & Latent rate (rate of susceptible individuals becoming infectious)  & T$^{-1}$ \\
$q$ & Proportion of the latent/exposed individuals becoming asymptomatic infectious  & - \\
$\xi$ &  Proportion of the asymptomatic infectious individuals becoming recovered/removed (without developing any symptoms) & - \\
$\chi$ &  Incubation rate (rate of a part of asymptomatic infectious individuals developing symptoms) & T$^{-1}$ \\
$\gamma_A$ & Recovery rate of asymptomatic infectious individuals  &  T$^{-1}$\\
$\gamma_I$ & Recovery rate of symptomatic infectious individuals  &  T$^{-1}$ \\ [1ex]      
\hline
\end{tabular}
\caption{Description of the independent and dependent variables as well as parameters of $\mathscr{M}$, along with their units.}
\label{table-1}
\end{table}

We notice that by integration (with respect to $\theta$ over $\mathbb{R}_0^+$) and summation of the left and right-hand side of the derived ordinary differential equations, one gets 
\begin{equation}
\label{Nconst}
\frac{\de N}{\de t}=0\Leftrightarrow N=N_0\coloneqq S_0+V_0+E_0+A_0+I_0+R_0,
\end{equation}
where $$E_0\coloneqq\int\limits_0^\infty{e_0{\left(\theta\right)}\,\de\theta},\text{ }A_0\coloneqq\int\limits_0^\infty{a_0{\left(\theta\right)}\,\de\theta}\text{ and }I_0\coloneqq\int\limits_0^\infty{i_0{\left(\theta\right)}\,\de\theta}.$$ Hence, an additional hypothesis made is as follows. 
\begin{enumerate}
\setcounter{enumi}{9}
\item The total population remains constant. This is a practical (yet not necessary) assumption, and makes sense when the time-span of the modeled epidemiological phenomenon is way shorter than the time needed for observable changes of the total population (whether they are caused by the epidemic or not).
\end{enumerate} 

Equations \eqref{SVEIAR-age;a}-\eqref{SVEIAR-age;e} are independent of $R$, hence the problem is reduced to the aforementioned subsystem itself. In fact, with \eqref{Nconst} at hand, $R$ can be easily calculated by $$R=N_0-S-V-E-A-I.$$  

\subsubsection{Scaling of age}
\label{sclng}

In order to simplify the analysis of \eqref{SVEIAR-age;a}-\eqref{SVEIAR-age;e}, we eliminate the factor $\omega$. We do so by the \textit{scaling} of the independent age-variable, $\theta$, and turning it to another time-variable measured in the same units as $t$. 

Hence, while keeping the same notation, we change the variables as follows 
\begin{align*}
\omega\theta&\mapsto\theta,\\
\frac{1}{\omega}f\circ\frac{1}{\omega}\mathrm{id}&\mapsto f,\\
g\circ\frac{1}{\omega}\mathrm{id}&\mapsto g,
\end{align*}
for $\left(f,g\right)\in\,\left\{e{\left(t,\,\cdot\,\right)},a{\left(t,\,\cdot\,\right)},i{\left(t,\,\cdot\,\right)}\,\big|\,t\in\mathbb{R}_0^+\right\}\times\left\{\beta_A,\beta_I,k,q,\gamma_A,\xi,\chi,\gamma_I\right\}$, and \eqref{SVEIAR-age;a}-\eqref{SVEIAR-age;e} then becomes 
\begin{subequations}
\label{SVEIAR-age-scl}
\begin{align}
&\begin{cases}
\dfrac{\de S}{\de t}=\mu N_0-\left(p+\int\limits_0^\infty{\beta_A{\left(\theta\right)}a{\left(\,\cdot\,,\theta\right)}+\beta_I{\left(\theta\right)}i{\left(\,\cdot\,,\theta\right)}\,\de\theta}+\mu\right)S\\
S{\left(0\right)}=S_0,\label{SVEIAR-age-scl;a}
\end{cases}\\
&\begin{cases}
\dfrac{\de V}{\de t}=pS-\left(\zeta\epsilon+\int\limits_0^\infty{\beta_A{\left(\theta\right)}a{\left(\,\cdot\,,\theta\right)}+\beta_I{\left(\theta\right)}i{\left(\,\cdot\,,\theta\right)}\,\de\theta}\left(1-\epsilon\right)+\mu\right)V\\
V{\left(0\right)}=V_0,\label{SVEIAR-age-scl;b}
\end{cases}\\
&\begin{cases}
\dfrac{\partial e}{\partial t}+\dfrac{\partial e}{\partial \theta}=-\left(k+\mu\right)e\\
e{\left(\,\cdot\,,0\right)}=\int\limits_0^\infty{\beta_A{\left(\theta\right)}a{\left(\,\cdot\,,\theta\right)}+\beta_I{\left(\theta\right)}i{\left(\,\cdot\,,\theta\right)}\,\de\theta}\left(S+\left(1-\epsilon\right)V\right)\\
e{\left(0,\,\cdot\,\right)}=e_0,\label{SVEIAR-age-scl;c}
\end{cases}\\
&\begin{cases}
\dfrac{\partial a}{\partial t}+\dfrac{\partial a}{\partial \theta}=-\left(\gamma_A\xi+\chi\left(1-\xi\right)+\mu\right)a\\
a{\left(\,\cdot\,,0\right)}=\int\limits_0^\infty{k{\left(\theta\right)}q{\left(\theta\right)}e{\left(\,\cdot\,,\theta\right)}\,\de\theta}\\
a{\left(0,\,\cdot\,\right)}=a_0,\label{SVEIAR-age-scl;d}
\end{cases}\\
&\begin{cases}
\dfrac{\partial i}{\partial t}+\dfrac{\partial i}{\partial \theta}=-\left(\gamma_I+\mu\right)i\\
i{\left(\,\cdot\,,0\right)}=\int\limits_0^\infty{k{\left(\theta\right)}\left(1-q{\left(\theta\right)}\right)e{\left(\,\cdot\,,\theta\right)}+\chi{\left(\theta\right)}\left(1-\xi{\left(\theta\right)}\right)a{\left(\,\cdot\,,\theta\right)}\,\de\theta}\\
i{\left(0,\,\cdot\,\right)}=i_0,\label{SVEIAR-age-scl;e}
\end{cases}
\end{align}
\end{subequations}
where $t$ and (the new) $\theta$ are now measured in the same time-units. 

The flow diagram of the differential equations in \eqref{SVEIAR-age} is shown in \hyperref[fig-1]{Figure \ref*{fig-1}}. 

\begin{figure}[H]
\centering
\includegraphics[width=1\textwidth]{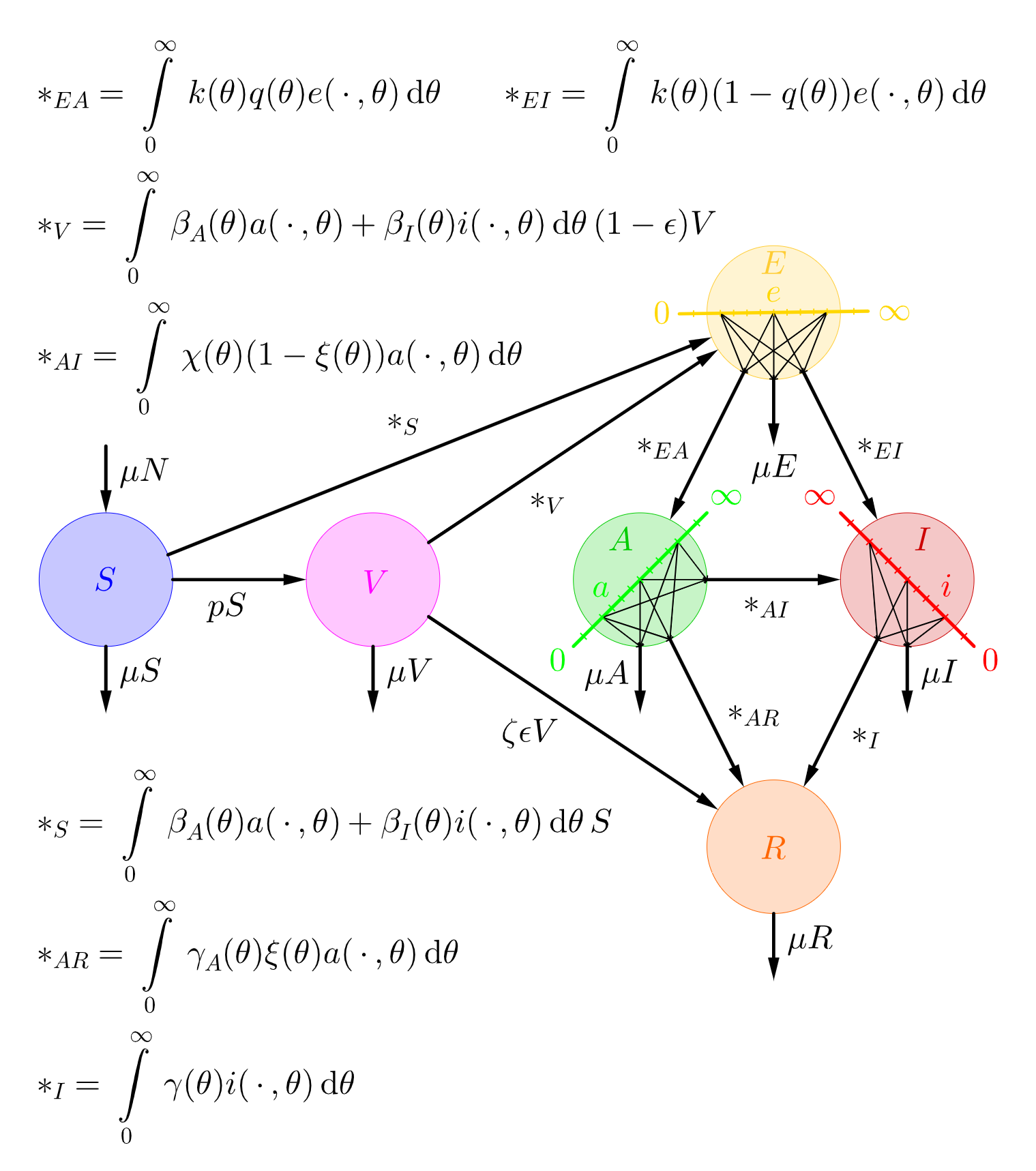}
\caption{Flow diagram of $\mathscr{P}$ \eqref{SVEIAR-age-scl}. The increase/decrease of $A$ and $I$ reflects the outbreak/attenuation of the epidemic, while $E$ is the only source of the aforementioned compartments.}
\label{fig-1}
\end{figure} 

\subsubsection{Global well-posedness}
\label{wllpsdnss}

We set 
\begin{subequations}
\label{grk}
\begin{align}
\beta&\coloneqq \int\limits_0^\infty{\beta_A{\left(\theta\right)}a{\left(\,\cdot\,,\theta\right)}+\beta_I{\left(\theta\right)}i{\left(\,\cdot\,,\theta\right)}\,\de\theta},\label{grk;a}\\
\varepsilon&\coloneqq e{\left(\,\cdot\,,0\right)}=\beta\left(S+\left(1-\epsilon\right)V\right),\label{grk;b}\\
\alpha&\coloneqq a{\left(\,\cdot\,,0\right)}=\int\limits_0^\infty{k{\left(\theta\right)}q{\left(\theta\right)}e{\left(\,\cdot\,,\theta\right)}\,\de\theta},\label{grk;c}\\
\iota&\coloneqq i{\left(\,\cdot\,,0\right)}=\int\limits_0^\infty{k{\left(\theta\right)}\left(1-q{\left(\theta\right)}\right)e{\left(\,\cdot\,,\theta\right)}+\chi{\left(\theta\right)}\left(1-\xi{\left(\theta\right)}\right)a{\left(\,\cdot\,,\theta\right)}\,\de\theta}.\label{grk;d}
\end{align}
\end{subequations}

Integrating the independent variables of \eqref{SVEIAR-age-scl;a} and \eqref{SVEIAR-age-scl;b} along $\left[0,T\right)$, as well as the independent variables of \eqref{SVEIAR-age-scl;c}-\eqref{SVEIAR-age-scl;e} along the \textit{characteristic straight-line paths} $$\left\{\left(t,\theta\right)\in\left[0,T\right)\times\mathbb{R}_0^+\,\big|\,t-\theta=c\right\},\text{ }\forall c\in\mathbb{R},$$ we deduce that 
\begin{subequations}
\label{volt1}
\begin{align}
S{\left(t\right)}&=S_0\e^{-\int\limits_0^t{p+\beta{\left(s\right)}+\mu\,\de s}}+\mu N_0\int\limits_0^t{\e^{-\int\limits_s^t{p+\beta{\left(\tau\right)}+\mu\,\de\tau}}\,\de s},\text{ }\forall t\in\left[0,T\right),\label{volt1;a}\\
V{\left(t\right)}&=V_0\e^{-\int\limits_0^t{\zeta\epsilon+\beta{\left(s\right)}\left(1-\epsilon\right)+\mu\,\de s}}+p\int\limits_0^t{S{\left(s\right)}\e^{-\int\limits_s^t{\zeta\epsilon+\beta{\left(\tau\right)}\left(1-\epsilon\right)+\mu\,\de\tau}}\,\de s},\text{ }\forall t\in\left[0,T\right),\label{volt1;b}\\
e{\left(t,\theta\right)}&=\begin{cases}
e_0{\left(\theta-t\right)}\e^{-\int\limits_0^t{k{\left(\theta-t+s\right)}+\mu\,\de s}},&\text{ if }t\in\left[0,\theta\right)\subsetneq\left[0,T\right)\\
\varepsilon{\left(t-\theta\right)}\e^{-\int\limits_0^\theta{k{\left(s\right)}+\mu\,\de s}},&\text{ if }\theta\in\left[0,t\right)\subsetneq\left[0,T\right),
\end{cases}\label{volt1;c}\\
a{\left(t,\theta\right)}&=\begin{cases}
a_0{\left(\theta-t\right)}\e^{-\int\limits_0^t{\gamma_A{\left(\theta-t+s\right)}\xi{\left(\theta-t+s\right)}+\chi{\left(\theta-t+s\right)}\left(1-\xi{\left(\theta-t+s\right)}\right)+\mu\,\de s}},&\text{ if }t\in\left[0,\theta\right)\subsetneq\left[0,T\right)\\
\alpha{\left(t-\theta\right)}\e^{-\int\limits_0^\theta{\gamma_A{\left(s\right)}\xi{\left(s\right)}+\chi{\left(s\right)}\left(1-\xi{\left(s\right)}\right)+\mu\,\de s}},&\text{ if }\theta\in\left[0,t\right)\subsetneq\left[0,T\right).
\end{cases}\label{volt1;d}\\
i{\left(t,\theta\right)}&=\begin{cases}
i_0{\left(\theta-t\right)}\e^{-\int\limits_0^t{\gamma_I{\left(\theta-t+s\right)}+\mu\,\de s}},&\text{ if }t\in\left[0,\theta\right)\subsetneq\left[0,T\right)\\
\iota{\left(t-\theta\right)}\e^{-\int\limits_0^\theta{\gamma_I{\left(s\right)}+\mu\,\de s}},&\text{ if }\theta\in\left[0,t\right)\subsetneq\left[0,T\right),
\end{cases}\label{volt1;e}
\end{align}
\end{subequations} 

We then plug system \eqref{volt1} into system \eqref{grk} to obtain 
\begin{subequations}
\label{grk2}
\begin{align}
\beta{\left(t\right)}&=\int\limits_0^t{\beta_1{\left(t,s\right)}\,\de s}+\int\limits_0^\infty{\beta_2{\left(t,s\right)}\,\de s},\text{ }\forall t\in\left[0,T\right),\label{grk2;a}\\
\varepsilon{\left(t\right)}&=\beta{\left(t\right)}\left(S{\left(t\right)}+\left(1-\epsilon\right)V{\left(t\right)}\right),\text{ }\forall t\in\left[0,T\right),\label{grk2;b}\\
\alpha{\left(t\right)}&=\int\limits_0^t{\alpha_1{\left(t,s\right)}\,\de s}+\int\limits_0^\infty{\alpha_2{\left(t,s\right)}\,\de s},\text{ }\forall t\in\left[0,T\right),\label{grk2;c}\\
\iota{\left(t\right)}&=\int\limits_0^t{\iota_1{\left(t,s\right)}\,\de s}+\int\limits_0^\infty{\iota_2{\left(t,s\right)}\,\de s},\text{ }\forall t\in\left[0,T\right).\label{grk2;d}
\end{align}
\end{subequations}
where 
\begin{align*}
\beta_1{\left(t,s\right)}&\coloneqq\beta_A{\left(t-s\right)}\alpha{\left(s\right)}\e^{-\int\limits_0^{t-s}{\gamma_A{\left(\tau\right)}\xi{\left(\tau\right)}+\chi{\left(\tau\right)}\left(1-\xi{\left(\tau\right)}\right)+\mu\,\de \tau}}+\\
&\phantom{\coloneqq}+\beta_I{\left(t-s\right)}\iota{\left(s\right)}\e^{-\int\limits_0^{t-s}{\gamma_I{\left(\tau\right)}+\mu\,\de\tau}},\\
\beta_2{\left(t,s\right)}&\coloneqq\beta_A{\left(t+s\right)}a_0{\left(s\right)}\e^{-\int\limits_0^t{\gamma_A{\left(\tau+s\right)}\xi{\left(\tau+s\right)}+\chi{\left(\tau+s\right)}\left(1-\xi{\left(\tau+s\right)}\right)+\mu\,\de\tau}}+\\
&\phantom{\coloneqq}+\beta_I{\left(t+s\right)}i_0{\left(s\right)}\e^{-\int\limits_0^t{\gamma_I{\left(\tau+s\right)}+\mu\,\de\tau}},
\end{align*} 
$S$ and $V$ have already been calculated in terms of $\beta$ in \eqref{volt1;a} and \eqref{volt1;b}, respectively, 
\begin{align*}
\alpha_1{\left(t,s\right)}&\coloneqq k{\left(t-s\right)}q{\left(t-s\right)}\beta{\left(s\right)}\left(S{\left(s\right)}+\left(1-\epsilon\right)V{\left(s\right)}\right)\e^{-\int\limits_0^{t-s}{k{\left(\tau\right)}+\mu\,\de\tau}},\\
\alpha_2{\left(t,s\right)}&\coloneqq k{\left(t+s\right)}q{\left(t+s\right)}e_0{\left(s\right)}\e^{-\int\limits_0^t{k{\left(\tau+s\right)}+\mu\,\de\tau}}
\end{align*} 
and
\begin{align*}
\iota_1{\left(t,s\right)}&\coloneqq k{\left(t-s\right)}\left(1-q{\left(t-s\right)}\right)\beta{\left(s\right)}\left(S{\left(s\right)}+\left(1-\varepsilon\right)V{\left(s\right)}\right)\e^{-\int\limits_0^{t-s}{k{\left(\tau\right)}+\mu\,\de\tau}}+\\
&\phantom{\coloneqq}+\chi{\left(t-s\right)}\left(1-\xi{\left(t-s\right)}\right)\alpha{\left(s\right)}\e^{-\int\limits_0^{t-s}{\gamma_A{\left(\tau\right)}\xi{\left(\tau\right)}+\chi{\left(\tau\right)}\left(1-\xi{\left(\tau\right)}\right)+\mu\,\de\tau}},\\
\iota_2{\left(t,s\right)}&\coloneqq k{\left(t+s\right)}\left(1-q{\left(t+s\right)}\right)e_0{\left(s\right)}\e^{-\int\limits_0^t{k{\left(\tau+s\right)}+\mu\,\de\tau}}+\\
&\phantom{\coloneqq}+\chi{\left(t+s\right)}\left(1-\xi{\left(t+s\right)}\right)a_0{\left(s\right)}\e^{-\int\limits_0^t{\gamma_A{\left(t+s\right)}\xi{\left(t+s\right)}+\chi{\left(t+s\right)}\left(1-\xi{\left(t+s\right)}\right)+\mu\,\de\tau}}.
\end{align*} 

Equations \eqref{grk2;a}, \eqref{grk2;c} and \eqref{grk2;d} can be considered as an auxiliary problem (in integral form) for the unknown functions $\beta$, $\alpha$ and $\iota$. A direct application of the \textit{classic theory of integral equations} provides us with the following preliminary result, the standard proof of which is omitted (see, e.g., \cite{iannelli2017basic,inaba2017age,webb1985theory}).  
\begin{theorem}
\label{glwlp1}
For every $\left(S_0,V_0,e_0,a_0,i_0,R_0\right)\in\mathbb{R}^2\times{\left(L^1{\left(\mathbb{R}_0^+;\mathbb{R}\right)}\right)}^3\times\mathbb{R}$, the problem \eqref{grk2;a}, \eqref{grk2;c}, \eqref{grk2;d} is globally (i.e. $T=\infty$) well-posed, with $\left(\beta,\alpha,\iota\right)\in {\left(C{\left(\mathbb{R}_0^+;\mathbb{R}\right)}\right)}^3$. 
\end{theorem}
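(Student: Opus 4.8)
The plan is to read \eqref{grk2;a}, \eqref{grk2;c} and \eqref{grk2;d} as a single nonlinear system of Volterra integral equations of the second kind for the triple $u\coloneqq\left(\beta,\alpha,\iota\right)$, and to treat it by the usual Picard--Banach scheme: local solvability by a contraction argument, followed by continuation to $\left[0,\infty\right)$ via an a priori bound. Writing $u=\mathcal{F}\left[u\right]$, the forcing term is the triple of the \emph{data-only} integrals $t\mapsto\left(\int_0^\infty\beta_2,\int_0^\infty\alpha_2,\int_0^\infty\iota_2\right)$, while the Volterra part collects the integrals over $\left[0,t\right]$ of $\beta_1,\alpha_1,\iota_1$; here $S$ and $V$ are not independent unknowns but are regarded as the nonlinear functionals of $\beta$ furnished by \eqref{volt1;a} and \eqref{volt1;b}.

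First I would record the structural estimates that make the scheme work. Every exponential weight appearing in the kernels has a non-negative integrand built solely from the $L^\infty$ rates $\gamma_A,\xi,\chi,\gamma_I,k,\mu$ --- crucially, none of them involves the unknown $\beta$ --- so each such weight is bounded by $1$. Together with $\beta_A,\beta_I,k,q,\xi,\chi\in L^\infty$, this bounds the forcing term by a constant multiple of $\lVert a_0\rVert_{L^1}+\lVert i_0\rVert_{L^1}+\lVert e_0\rVert_{L^1}$, so it is finite; its continuity in $t$ follows from dominated convergence for the exponential factors combined with the continuity of translation in $L^1$ of the data (which, after the substitution $u=t+s$, handles the merely-$L^\infty$ factors such as $\beta_A\left(t+s\right)$). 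The same two facts give, on each finite horizon, that $\beta\mapsto\left(S,V\right)$ maps bounded sets of $C\left(\left[0,T_0\right]\right)$ into bounded sets and is Lipschitz there, since $S$ and $V$ depend on $\beta$ only through integrals of bounded, Lipschitz-in-$\beta$ exponentials. I would then run the contraction on $X\coloneqq C\left(\left[0,T_0\right];\mathbb{R}^3\right)$ with the sup-norm, on a closed ball centred at the forcing term: the $\beta_1$ contribution is linear in $\left(\alpha,\iota\right)$ with $L^\infty$ coefficients, while $\alpha_1,\iota_1$ reduce, up to bounded factors, to the product $\beta\cdot\left(S+\left(1-\epsilon\right)V\right)$ plus a linear term in $\alpha$. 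Using the boundedness and Lipschitz dependence just noted, each term of $\mathcal{F}\left[u\right]-\mathcal{F}\left[\tilde u\right]$ is controlled by $\mathrm{const}\cdot T_0\,\lVert u-\tilde u\rVert_X$, so for $T_0$ small $\mathcal{F}$ is a self-map of the ball and a contraction; Banach's theorem yields a unique local, continuous solution. Uniqueness and the time-translation structure of the kernels then allow this solution to be continued up to a maximal $T_\ast\in\left(0,\infty\right]$.

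The main obstacle, and the only place where more than bookkeeping is needed, is the a priori bound that forces $T_\ast=\infty$. The $\beta$-equation is benign: since its kernels are $\le 1$ and free of $\beta$, one gets $\lvert\beta\left(t\right)\rvert\le C+c\int_0^t\left(\lvert\alpha\rvert+\lvert\iota\rvert\right)$. The difficulty is the factor $S+\left(1-\epsilon\right)V$ multiplying $\beta$ in $\alpha_1,\iota_1$, which couples the estimate back to $\beta$ nonlinearly through \eqref{volt1;a}--\eqref{volt1;b}. I would close this by an independent uniform bound on $S$ and $V$: in the physically relevant regime of non-negative data this is immediate from the conservation law \eqref{Nconst}, which forces $0\le S,V\le N_0$ and hence caps $S+\left(1-\epsilon\right)V$ by $N_0$. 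The three estimates then combine into $U\left(t\right)\le C+C\int_0^t U$ for $U\coloneqq\lvert\beta\rvert+\lvert\alpha\rvert+\lvert\iota\rvert$, and Gr\"onwall gives $U\left(t\right)\le C\e^{Ct}$, finite on every compact interval; this precludes blow-up at any finite $T_\ast$, so $T_\ast=\infty$ and $\left(\beta,\alpha,\iota\right)\in\left(C\left(\mathbb{R}_0^+;\mathbb{R}\right)\right)^3$, as claimed. For the sign-indefinite data allowed in the statement the kernels remain locally Lipschitz and $S,V$ stay bounded on each compact subinterval, so the same continuation principle from the classic Volterra theory cited by the authors applies.
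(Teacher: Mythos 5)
Your local argument --- reading \eqref{grk2;a}, \eqref{grk2;c}, \eqref{grk2;d} as a nonlinear Volterra system for $\left(\beta,\alpha,\iota\right)$ with $S,V$ regarded as the functionals \eqref{volt1;a}--\eqref{volt1;b} of $\beta$, bounding all exponential kernels by $1$, checking continuity of the data-only forcing via translation-continuity in $L^1$, and running Banach's fixed point on $C{\left(\left[0,T_0\right];\mathbb{R}^3\right)}$ followed by continuation to a maximal $T_\ast$ --- is sound, and it is exactly the ``classic theory of integral equations'' route the paper gestures at; note the paper \emph{omits} its proof entirely, citing Iannelli, Inaba and Webb, so there is no detailed in-paper argument to match. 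Up to local well-posedness plus the continuation alternative, your proposal is correct and fills in what the paper leaves implicit.

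The genuine gap is in your global step, and it sits precisely on the hypotheses the theorem actually states. Your Gr\"onwall closure needs $S+\left(1-\epsilon\right)V$ uniformly bounded, which you extract from the conservation law \eqref{Nconst}. But (i) \eqref{Nconst} caps $S$ and $V$ by $N_0$ only if \emph{all} compartments are non-negative, and non-negativity of $e,a,i$ (equivalently of $\beta,\alpha,\iota$) is not available at this point: in the paper it is Proposition \ref{glwlp2}, deduced \emph{from} the uniqueness supplied by this very theorem, so invoking it here is circular unless you first prove positivity independently (e.g.\ by running your contraction in the positive cone, after which your Gr\"onwall estimate does close and gives the global result for non-negative data --- the only case used downstream in Corollary \ref{glwlp3}); and (ii) the theorem is stated for sign-indefinite data in $\mathbb{R}^2\times{\left(L^1{\left(\mathbb{R}_0^+;\mathbb{R}\right)}\right)}^3\times\mathbb{R}$, where no such cap exists and the nonlinearity is genuinely superlinear: if $\beta$ goes negative, \eqref{volt1;a} makes $S$ grow like $\exp{\left(\int_0^t{\left|\beta\right|}\right)}$, so the source $\varepsilon=\beta\left(S+\left(1-\epsilon\right)V\right)$ feeding \eqref{grk2;c}--\eqref{grk2;d} is exponential in $\int{\left|\beta\right|}$, and the loop ``$\beta$ more negative $\Rightarrow S$ larger $\Rightarrow\varepsilon$ more negative $\Rightarrow\alpha,\iota$ more negative $\Rightarrow\beta$ more negative'' is self-reinforcing, of the finite-time blow-up type $b''\sim c\,b\,\e^{\int b}$. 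Hence your bound $U\leq C+C\int_0^t{U}$ simply fails for signed data, and your closing sentence --- ``$S,V$ stay bounded on each compact subinterval, so the same continuation principle applies'' --- is circular: boundedness on compact subintervals of the maximal interval is automatic and excludes nothing at $T_\ast<\infty$. To repair the argument you must either establish positivity first and prove the theorem on the non-negative cone, or produce a different a priori bound valid for signed data; as written you do neither, and it is worth flagging that the theorem's stated generality is itself not obviously attainable from the standard theory the authors cite.
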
 

Moreover, it is straightforward to check that if $\left(S_0,V_0,e_0,a_0,i_0,R_0\right)=\left(0,0,0,0,0,N_0\right)$, then the solution of \eqref{grk2;a}, \eqref{grk2;c}, \eqref{grk2;d} is the constant $\left(\beta,\alpha,\iota\right)=\left(0,0,0\right)$. Hence, from the uniqueness of solution we derive the next result. 
\begin{proposition}
\label{glwlp2}
If $\left(S_0,V_0,e_0,a_0,i_0,R_0\right)\in{\left(\mathbb{R}_0^+\right)}^2\times{\left(L^1{\left(\mathbb{R}_0^+;\mathbb{R}_0^+\right)}\right)}^3\times\mathbb{R}_0^+$, then $\left(\beta,\alpha,\iota\right)\in {\left(C{\left(\mathbb{R}_0^+;\mathbb{R}_0^+\right)}\right)}^3$. 
\end{proposition}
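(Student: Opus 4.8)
The plan is to observe that Theorem~\ref{glwlp1} already supplies a unique global solution $\left(\beta,\alpha,\iota\right)\in\left(C\left(\mathbb{R}_0^+;\mathbb{R}\right)\right)^3$, so that only the \emph{sign} needs to be upgraded: it suffices to prove that non-negative data force $\beta,\alpha,\iota\geq 0$, after which the claimed continuity with non-negative values is immediate. The whole argument rests on a single structural feature of the integral system \eqref{grk2;a}, \eqref{grk2;c}, \eqref{grk2;d}, namely that it is \emph{cooperative} and has \emph{non-negative forcing}.

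First I would dispose of $S$ and $V$. Reading off \eqref{volt1;a} and \eqref{volt1;b}, every exponential weight there is strictly positive and the prefactors $S_0,V_0,\mu N_0,p$ are all non-negative; hence $S\left(t\right),V\left(t\right)\geq 0$ for all $t$, \emph{irrespective of the sign of} $\beta$, and therefore $S+\left(1-\epsilon\right)V\geq 0$ since $\epsilon\in\left[0,1\right]$. This is the crucial point that decouples the positivity of the ODE variables from that of $\beta$.

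Next I would record that all parameter functions $\beta_A,\beta_I,k,q,\gamma_A,\xi,\chi,\gamma_I$ take values in $\mathbb{R}_0^+$ (with $q,\xi,\epsilon\in\left[0,1\right]$), the data $e_0,a_0,i_0$ are non-negative, and every exponential appearing in the kernels is positive. Inspecting the explicit kernels then shows two things at once: the purely data-driven terms $\int_0^\infty\beta_2\left(t,s\right)\,\de s$, $\int_0^\infty\alpha_2\left(t,s\right)\,\de s$, $\int_0^\infty\iota_2\left(t,s\right)\,\de s$ are non-negative; and the memory kernels $\beta_1,\alpha_1,\iota_1$ are non-negative \emph{whenever} $\beta\left(s\right),\alpha\left(s\right),\iota\left(s\right)\geq 0$, since $\beta_1$ carries $\alpha,\iota$, while $\alpha_1$ and $\iota_1$ carry the factor $\beta\left(S+\left(1-\epsilon\right)V\right)$, non-negative by the previous step. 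In other words, the Volterra operator $\mathcal{T}$ whose fixed point is $\left(\beta,\alpha,\iota\right)$ maps the closed cone $\left(C\left(\left[0,T\right];\mathbb{R}_0^+\right)\right)^3$ into itself.

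Finally I would conclude by a cone-preservation argument. The solution furnished by the classical theory underlying Theorem~\ref{glwlp1} is the uniform-on-compacts limit of the Picard iterates of $\mathcal{T}$; starting these iterates from the zero seed---which lies in the cone and is itself the unique solution associated with the disease-free data $\left(0,0,0,0,0,N_0\right)$---every iterate stays in the cone by the step above, and the cone is closed under uniform limits, so the limit is non-negative. By the uniqueness in Theorem~\ref{glwlp1} this non-negative limit \emph{is} the solution, whence $\left(\beta,\alpha,\iota\right)\in\left(C\left(\mathbb{R}_0^+;\mathbb{R}_0^+\right)\right)^3$. The main obstacle I anticipate is the rigor of this last step: $\mathcal{T}$ need not be a global contraction, so one must either run the cone-preserving iteration on short subintervals and concatenate (the contraction length being uniform, this reaches every $T$) or invoke a monotone-comparison version of the same fact; establishing $S,V\geq 0$ independently of $\mathrm{sign}\,\beta$ is the one genuinely model-specific ingredient that makes the cone argument go through.
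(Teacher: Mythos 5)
Your proposal is correct, and it is genuinely more than what the paper records: the paper's entire justification of Proposition~\ref{glwlp2} is the two-line remark preceding it --- one checks that the disease-free datum $\left(0,0,0,0,0,N_0\right)$ yields the constant solution $\left(\beta,\alpha,\iota\right)=\left(0,0,0\right)$, and then ``from the uniqueness of solution'' the proposition is said to follow. Taken literally, uniqueness at that single datum does not by itself force a sign for solutions emanating from arbitrary nonnegative data; the paper is implicitly deferring the positivity-preservation to the classical Volterra theory it cites for Theorem~\ref{glwlp1} (Iannelli, Inaba, Webb), where the standard argument is precisely the one you wrote out. Your proof supplies the missing mechanism explicitly: the observation that \eqref{volt1;a}--\eqref{volt1;b} give $S,V\geq 0$ \emph{regardless} of the sign of $\beta$ is indeed the one model-specific ingredient (it is what keeps the factor $\beta\left(S+\left(1-\epsilon\right)V\right)$ in $\alpha_1$ and $\iota_1$ nonnegative as soon as $\beta\geq 0$), the nonnegativity of the data-driven forcings $\beta_2,\alpha_2,\iota_2$ and of the memory kernels on the cone is checked correctly, and the conclusion via Picard iterates seeded at zero, closedness of the cone under uniform limits, and concatenation over subintervals of uniform contraction length (legitimate, since the solution is bounded on each compact $\left[0,T\right]$) is sound. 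Incidentally, your zero seed coincides with the paper's disease-free observation, but in your argument it plays an honest role as the starting iterate rather than as the whole proof. What the paper's route buys is brevity at the cost of rigor left to references; what yours buys is a self-contained proof that makes visible exactly where the hypotheses $\epsilon\in\left[0,1\right]$, $q,\xi\in\left[0,1\right]$ and the nonnegativity of the rate functions enter.
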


The global well-posedness of the main problem then follows. 
\begin{corollary}
\label{glwlp3}
For every $\left(S_0,V_0,e_0,a_0,i_0,R_0\right)\in{\left(\mathbb{R}_0^+\right)}^2\times{\left(L^1{\left(\mathbb{R}_0^+;\mathbb{R}_0^+\right)}\right)}^3\times\mathbb{R}_0^+$, the $\mathscr{P}$ \eqref{SVEIAR-age-scl} is globally well-posed, with $\left(S,V,e,a,i\right)\in{\left(C^1{\left(\mathbb{R}_0^+;\mathbb{R}_0^+\right)}\right)}^2\times {\left(C{\left(\mathbb{R}_0^+;L^1{\left(\mathbb{R}_0^+\right)}\right)}\right)}^3$. In particular, the differential equations in \eqref{SVEIAR-age-scl;a} and \eqref{SVEIAR-age-scl;b} are satisfied $\forall t\in\mathbb{R}_0^+$, while the subsystem \eqref{SVEIAR-age-scl;c}-\eqref{SVEIAR-age-scl;e} is satisfied in the following sense 
\begin{align*}
&\begin{cases}
\lim\limits_{h\to 0}{\dfrac{e{\left(t+h,\theta+h\right)}-e{\left(t,\theta\right)}}{h}}=-\left(k{\left(\theta\right)}+\mu\right)e{\left(t,\theta\right)},\text{ for a.e. }\left(t,\theta\right)\in{\left(\mathbb{R}_0^+\right)}^2\\
e{\left(t,0\right)}=\varepsilon{\left(t\right)},\text{ }\forall t\in\mathbb{R}^+\\
e{\left(0,\theta\right)}=e_0{\left(\theta\right)},\text{ for a.e. }\theta\in\mathbb{R}_0^+,
\end{cases}\\
&\begin{cases}
\lim\limits_{h\to 0}{\dfrac{a{\left(t+h,\theta+h\right)}-a{\left(t,\theta\right)}}{h}}=-\left(\gamma_A{\left(\theta\right)}\xi{\left(\theta\right)}+\chi{\left(\theta\right)}\left(1-\xi{\left(\theta\right)}\right)+\mu\right)a{\left(t,\theta\right)},\text{ for a.e. }\left(t,\theta\right)\in{\left(\mathbb{R}_0^+\right)}^2\\
a{\left(t,0\right)}=\alpha{\left(t\right)},\text{ }\forall t\in\mathbb{R}^+\\
a{\left(0,\theta\right)}=a_0{\left(\theta\right)},\text{ for a.e. }\theta\in\mathbb{R}_0^+,
\end{cases}\\
&\begin{cases}
\lim\limits_{h\to 0}{\dfrac{i{\left(t+h,\theta+h\right)}-i{\left(t,\theta\right)}}{h}}=-\left(\gamma_I{\left(\theta\right)}+\mu\right)i{\left(t,\theta\right)},\text{ for a.e. }\left(t,\theta\right)\in{\left(\mathbb{R}_0^+\right)}^2\\
i{\left(t,0\right)}=\iota{\left(t\right)},\text{ }\forall t\in \mathbb{R}^+\\
i{\left(0,\theta\right)}=i_0{\left(\theta\right)},\text{ for a.e. }\theta\in\mathbb{R}_0^+. 
\end{cases}
\end{align*}
\end{corollary}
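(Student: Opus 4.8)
The plan is to treat Corollary~\ref{glwlp3} as a reconstruction-and-verification step built on top of Theorem~\ref{glwlp1} and Proposition~\ref{glwlp2}. First I would invoke Theorem~\ref{glwlp1} to obtain the unique global solution $\left(\beta,\alpha,\iota\right)\in{\left(C{\left(\mathbb{R}_0^+;\mathbb{R}\right)}\right)}^3$ of the auxiliary system \eqref{grk2;a}, \eqref{grk2;c}, \eqref{grk2;d}, and then Proposition~\ref{glwlp2} to upgrade this to $\left(\beta,\alpha,\iota\right)\in{\left(C{\left(\mathbb{R}_0^+;\mathbb{R}_0^+\right)}\right)}^3$ under the nonnegativity hypothesis on the data. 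Every remaining unknown is now an explicit functional of $\left(\beta,\alpha,\iota\right)$, so the task reduces to reading off the regularity and sign of each reconstructed object from its closed-form expression and checking that it indeed solves the corresponding equation.

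Next I would reconstruct $S$ and $V$ from the Volterra representations \eqref{volt1;a} and \eqref{volt1;b}. Since $\beta$ is continuous, the integrands and the nested exponentials are continuous in $t$, so both formulas define $C^1$ functions whose $t$-derivatives return precisely the right-hand sides of \eqref{SVEIAR-age-scl;a} and \eqref{SVEIAR-age-scl;b}; this yields the $C^1$ regularity and the pointwise (for all $t$) validity of the two ODEs at once. Nonnegativity is immediate termwise: in \eqref{volt1;a} both summands are products of nonnegative data ($S_0,\mu N_0\ge 0$) with strictly positive exponentials, and in \eqref{volt1;b} the same holds once $S\ge 0$ is known, since $V_0,p\ge 0$. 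With $S,V\in C{\left(\mathbb{R}_0^+;\mathbb{R}_0^+\right)}$ in hand, the boundary value $\varepsilon=\beta\left(S+\left(1-\epsilon\right)V\right)$ from \eqref{grk;b} is continuous and nonnegative as well.

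The core of the argument is the analysis of $e,a,i$ through the characteristic formulas \eqref{volt1;c}--\eqref{volt1;e}. For fixed $t$ I would split the $\theta$-integral at $\theta=t$: on $\{\theta>t\}$ the change of variables $\sigma=\theta-t$ and the bound $\e^{-\int_0^t(\,\cdot\,)\,\de s}\le 1$ control the contribution by $\left\lVert e_0\right\rVert_{L^1}$ (resp.\ $\left\lVert a_0\right\rVert_{L^1}$, $\left\lVert i_0\right\rVert_{L^1}$), while on $\{\theta<t\}$ the substitution $\sigma=t-\theta$ reduces the contribution to an integral of the continuous, hence locally bounded, boundary data $\varepsilon,\alpha,\iota$ over a compact interval; together these give $e\left(t,\cdot\right),a\left(t,\cdot\right),i\left(t,\cdot\right)\in L^1{\left(\mathbb{R}_0^+\right)}$, and nonnegativity is again termwise. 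The genuinely delicate point, which I expect to be the main obstacle, is the continuity of $t\mapsto\left(e,a,i\right){\left(t,\cdot\right)}$ as a map into $L^1$: one must show that small shifts in $t$ move both pieces of the piecewise formula by a small $L^1$-amount, handling the moving corner at $\theta=t$. This is where I would lean on the $L^1$-continuity of translations (to absorb the $e_0\left(\theta-t\right)$-type terms) combined with the uniform continuity of $\varepsilon,\alpha,\iota$ on compact time-intervals, with the contribution of the corner strip of width $\left\lvert h\right\rvert$ vanishing as $h\to 0$ because the densities are integrable.

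Finally I would verify the transport equations in the stated weak sense. Differentiating the explicit expressions \eqref{volt1;c}--\eqref{volt1;e} along the characteristic direction $\left(1,1\right)$ yields the directional-derivative identities at every Lebesgue point of the coefficients $k,\gamma_A,\gamma_I,\xi,\chi$, hence for a.e.\ $\left(t,\theta\right)$; the boundary traces $e\left(t,0\right)=\varepsilon\left(t\right)$, $a\left(t,0\right)=\alpha\left(t\right)$, $i\left(t,0\right)=\iota\left(t\right)$ hold for all $t>0$ by construction, and the initial traces reproduce $e_0,a_0,i_0$ for a.e.\ $\theta$. Uniqueness and continuous dependence descend from the corresponding properties of the auxiliary system in Theorem~\ref{glwlp1}, since the reconstruction map $\left(\beta,\alpha,\iota\right)\mapsto\left(S,V,e,a,i\right)$ is explicit and continuous, which completes the proof of global well-posedness.
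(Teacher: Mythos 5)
Your proposal is correct and follows exactly the route the paper intends: the paper derives Corollary~\ref{glwlp3} directly from Theorem~\ref{glwlp1} and Proposition~\ref{glwlp2} by reconstructing $\left(S,V,e,a,i\right)$ through the Volterra/characteristic representations \eqref{volt1;a}--\eqref{volt1;e}, leaving the verification of regularity, nonnegativity, and the a.e.\ directional-derivative formulation as standard. Your elaboration of the only delicate step --- $L^1$-continuity of $t\mapsto\left(e,a,i\right)\left(t,\cdot\right)$ via continuity of translations, uniform continuity of $\varepsilon,\alpha,\iota$ on compacta, and the vanishing corner strip at $\theta=t$ --- is a sound filling-in of what the paper omits.
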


We also note that we can obtain certain \textit{regularity results} by strengthening the assumptions regarding the data of the problem, but this lies beyond the scope of the present work. 

\subsubsection{Steady states and basic reproductive number}
\label{eqbrn} 

A \textit{steady state}, $\left(S^*,V^*,e^*,a^*,i^*\right)$, of $\mathscr{P}$ \eqref{SVEIAR-age-scl} is a constant-with-respect-to-$t$ solution, i.e. it is defined to satisfy 
\begin{subequations}
\label{stst0ode}
\begin{align}
&0=\mu N_0-\left(p+\beta^*+\mu\right)S^*,\label{stst0ode;a}\\
&0=p S^*-\left(\zeta\epsilon+\beta^*\left(1-\epsilon\right)+\mu\right)V^*,\label{stst0ode;b}\\
&\begin{cases}
\dfrac{\de e^*}{\de \theta}=-\left(k+\mu\right)e^*\\
e^*{\left(0\right)}=\varepsilon^*,\label{stst0ode;c}
\end{cases}\\
&\begin{cases}
\dfrac{\de a^*}{\de \theta}=-\left(\gamma_A\xi+\chi\left(1-\xi\right)+\mu\right)a^*\\
a^*{\left(0\right)}=\alpha^*,\label{stst0ode;d}
\end{cases}\\
&\begin{cases}
\dfrac{\de i^*}{\de \theta}=-\left(\gamma_I+\mu\right)i^*\\
i^*{\left(0\right)}=\iota^*,\label{stst0ode;e}
\end{cases}
\end{align}
\end{subequations}
where 
\begin{subequations}
\label{stst0}
\begin{align}
\beta^*&\coloneqq\int\limits_0^\infty{\beta_A{\left(\theta\right)}a^*{\left(\theta\right)}+\beta_I{\left(\theta\right)}i^*{\left(\theta\right)}\,\de\theta},\label{stst0;a}\\
\varepsilon^*&\coloneqq\beta^*\left(S^*+\left(1-\epsilon\right)V^*\right),\label{stst0;b}\\
\alpha^*&\coloneqq\int\limits_0^\infty{k{\left(\theta\right)}q{\left(\theta\right)}e^*{\left(\theta\right)}\,\de\theta},\label{stst0;c}\\ 
\iota^*&\coloneqq\int\limits_0^\infty{k{\left(\theta\right)}\left(1-q{\left(\theta\right)}\right)e^*{\left(\theta\right)}+\chi{\left(\theta\right)}\left(1-\xi{\left(\theta\right)}\right)a^*{\left(\theta\right)}\,\de\theta},\label{stst0;d}
\end{align}
\end{subequations}
hence 
\begin{subequations}
\label{stst00}
\begin{align}
S^*&=\frac{\mu N_0}{p+\beta^*+\mu},\label{stst00;a}\\
V^*&=\frac{p S^*}{\zeta\epsilon+\beta^*\left(1-\epsilon\right)+\mu},\label{stst00;b}\\
e^*{\left(\theta\right)}&=\varepsilon^*\e^{-\int\limits_0^\theta{k{\left(s\right)}+\mu\,\de s}},\text{ }\forall\theta\in\mathbb{R}_0^+,\label{stst00;c}\\
a^*{\left(\theta\right)}&=\alpha^*\e^{-\int\limits_0^\theta{\gamma_A{\left(s\right)}\xi{\left(s\right)}+\chi{\left(s\right)}\left(1-\xi{\left(s\right)}\right)+\mu\,\de s}},\text{ }\forall\theta\in\mathbb{R}_0^+,\label{stst00;d}\\
i^*{\left(\theta\right)}&=\iota^*\e^{-\int\limits_0^\theta{\gamma_I{\left(s\right)}+\mu\,\de s}},\text{ }\forall\theta\in\mathbb{R}_0^+.\label{stst00;e}
\end{align}
\end{subequations}
By plugging \eqref{stst0;b}-\eqref{stst0;d} into \eqref{stst00} and expressing the components of a steady state exclusively in terms of the constant parameter $\beta^*$, we get 
\begin{subequations}
\label{stst1}
\begin{align}
S^*&=\frac{\mu N_0}{p+\beta^*+\mu},\label{ststs1;a}\\
V^*&=\frac{p\mu N_0}{\left(p+\beta^*+\mu\right)\left(\zeta\epsilon+\beta^*\left(1-\epsilon\right)+\mu\right)},\label{stst1;b}\\
e^*{\left(\theta\right)}&=\beta^*\frac{\mu N_0}{p+\beta^*+\mu}\left(1+\frac{p\left(1-\epsilon\right)}{\zeta\epsilon+\beta^*\left(1-\epsilon\right)+\mu}\right)\e^{-\int\limits_0^\theta{k{\left(s\right)}+\mu\,\de s}},\text{ }\forall\theta\in\mathbb{R}_0^+,\label{stst1;c}\\
a^*{\left(\theta\right)}&=\beta^*\frac{\mu N_0}{p+\beta^*+\mu}\left(1+\frac{p\left(1-\epsilon\right)}{\zeta\epsilon+\beta^*\left(1-\epsilon\right)+\mu}\right)\int\limits_0^\infty{k{\left(s\right)}q{\left(s\right)}\e^{-\int\limits_0^s{k{\left(\tau\right)}+\mu\,\de\tau}}\,\de s}\times\nonumber\\
&\phantom{=}\times\e^{-\int\limits_0^\theta{\gamma_A{\left(s\right)}\xi{\left(s\right)}+\chi{\left(s\right)}\left(1-\xi{\left(s\right)}\right)+\mu\,\de s}},\text{ }\forall\theta\in\mathbb{R}_0^+,\label{stst1;d}\\
i^*{\left(\theta\right)}&=\beta^*\frac{\mu N_0}{p+\beta^*+\mu}\left(1+\frac{p\left(1-\epsilon\right)}{\zeta\epsilon+\beta^*\left(1-\epsilon\right)+\mu}\right)\times\nonumber\\
&\phantom{=}\times\left(
\begin{aligned}
&\phantom{+}\int\limits_0^\infty{k{\left(s\right)}\left(1-q{\left(s\right)}\right)\e^{-\int\limits_0^s{k{\left(\tau\right)}+\mu\,\de\tau}}\,\de s}+\\
&+\int\limits_0^\infty{k{\left(s\right)}q{\left(s\right)}\e^{-\int\limits_0^s{k{\left(\tau\right)+\mu}\,\de\tau}}\,\de s}\int\limits_0^\infty{\chi{\left(s\right)}\left(1-\xi{\left(s\right)}\right)\e^{-\int\limits_0^s{\gamma_A{\left(\tau\right)}\xi{\left(\tau\right)}+\chi{\left(\tau\right)}\left(1-\xi{\left(\tau\right)}\right)+\mu\,\de\tau}}\,\de s}
\end{aligned}
\right)\times\nonumber\\
&\phantom{=}\times\e^{-\int\limits_0^\theta{\gamma_I{\left(s\right)}+\mu\,\de s}},\text{ }\forall\theta\in\mathbb{R}_0^+.\label{stst1;e}
\end{align}
\end{subequations}

We now set 
\begin{equation}
    \boxed{\mathbb{R}_0^+\ni\mathcal{R}_0\coloneqq\frac{\mu N_0}{p+\mu}\left(1+\frac{p\left(1-\epsilon\right)}{\zeta\epsilon+\mu}\right)\left(\mathcal{R}_A+\mathcal{R}_I\right),}
    \label{R0-definition}
\end{equation}
where 
\begin{equation*}
\boxed{\mathbb{R}_0^+\ni\mathcal{R}_A\coloneqq\int\limits_0^\infty{k{\left(s\right)}q{\left(s\right)}\e^{-\int\limits_0^s{k{\left(\tau\right)}+\mu\,\de\tau}}\,\de s}\int\limits_0^\infty{\beta_A{\left(s\right)}\e^{-\int\limits_0^s{\gamma_A{\left(\tau\right)}\xi{\left(\tau\right)}+\chi{\left(\tau\right)}\left(1-\xi{\left(\tau\right)}\right)+\mu\,\de\tau}}\,\de s}}
\end{equation*}
and 
\begin{equation*}
\boxed{\begin{aligned}
\mathbb{R}_0^+\ni\mathcal{R}_I&\coloneqq\left(
\begin{aligned}
&\phantom{+}\int\limits_0^\infty{k{\left(s\right)}\left(1-q{\left(s\right)}\right)\e^{-\int\limits_0^s{k{\left(\tau\right)}+\mu\,\de\tau}}\,\de s}+\\
&+\int\limits_0^\infty{k{\left(s\right)}q{\left(s\right)}\e^{-\int\limits_0^s{k{\left(\tau\right)+\mu}\,\de\tau}}\,\de s}\int\limits_0^\infty{\chi{\left(s\right)}\left(1-\xi{\left(s\right)}\right)\e^{-\int\limits_0^s{\gamma_A{\left(\tau\right)}\xi{\left(\tau\right)}+\chi{\left(\tau\right)}\left(1-\xi{\left(\tau\right)}\right)+\mu\,\de\tau}}\,\de s}
\end{aligned}
\right)\times\\
&\phantom{\coloneqq}\times\int\limits_0^\infty{\beta_I{\left(s\right)}\e^{-\int\limits_0^s{\gamma_I{\left(\tau\right)}+\mu\,\de\tau}}\,\de s},
\end{aligned}}
\end{equation*}
for the \textit{basic reproductive number} of the aforementioned problem. Its definition emerges naturally from the following result. 
\begin{proposition}
\label{agedthm1} 
Concerning $\beta^*\in\mathbb{R}_0^+$, 
\begin{enumerate}
\item if $\mathcal{R}_0\leq 1$, then $\beta^*=0$, 
\item if $\mathcal{R}_0> 1$, then 
\begin{enumerate}[label=\roman*.]
\item either $\beta^*=0$, 
\item or $\beta^*>0$, such that $$b_2{\beta^*}^2+b_1\beta^*+b_0=0,$$ where 
\begin{align*}
b_2&=\left(1-\epsilon\right),\\
b_1&=\left(p+\mu\right)\left(1-\epsilon\right)+\zeta\epsilon+\mu-\mu N_0\left(1-\epsilon\right)\left(\mathcal{R}_A+\mathcal{R}_I\right),\\
b_0&=\left(p+\mu\right)\left(\epsilon\zeta+\mu\right)\left(1-\mathcal{R}_0\right).
\end{align*} 
\end{enumerate}
\end{enumerate}
\end{proposition}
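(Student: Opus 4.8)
The plan is to convert the defining relation \eqref{stst0;a} for $\beta^*$ into a single scalar fixed-point equation and to read off both assertions from it. First I would substitute the closed forms \eqref{stst1;d} and \eqref{stst1;e} of $a^*$ and $i^*$ into
\[
\beta^*=\int_0^\infty \beta_A(\theta)a^*(\theta)+\beta_I(\theta)i^*(\theta)\,\de\theta .
\]
Since in \eqref{stst1;d}--\eqref{stst1;e} the entire $\theta$-dependence sits in the two survival exponentials, the $\theta$-integration factors out exactly the integrals $\int_0^\infty\beta_A(s)\e^{-\int_0^s\gamma_A\xi+\chi(1-\xi)+\mu\,\de\tau}\,\de s$ and $\int_0^\infty\beta_I(s)\e^{-\int_0^s\gamma_I+\mu\,\de\tau}\,\de s$. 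Multiplying these against the bracketed constants already present in \eqref{stst1;d}--\eqref{stst1;e} reproduces precisely $\mathcal{R}_A$ and $\mathcal{R}_I$, so the relation collapses to
\[
\beta^*=\beta^*\,\underbrace{\frac{\mu N_0}{p+\beta^*+\mu}\left(1+\frac{p(1-\epsilon)}{\zeta\epsilon+\beta^*(1-\epsilon)+\mu}\right)\left(\mathcal{R}_A+\mathcal{R}_I\right)}_{=:\,F(\beta^*)} .
\]
Checking that the integrals and prefactors assemble into exactly $\mathcal{R}_A+\mathcal{R}_I$, with no stray factors, is the step that demands care: it is routine, but the bookkeeping of the nested exponentials is where an error would hide.

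Next I would record the elementary but decisive identity $F(0)=\mathcal{R}_0$, which is precisely the boxed definition \eqref{R0-definition}; this is what singles out $\mathcal{R}_0$ as the natural threshold. The fixed-point equation reads $\beta^*\left(1-F(\beta^*)\right)=0$, so either $\beta^*=0$ or $F(\beta^*)=1$. For the second alternative I would clear denominators, i.e.\ multiply $F(\beta^*)=1$ through by $(p+\beta^*+\mu)\left(\zeta\epsilon+\beta^*(1-\epsilon)+\mu\right)$ and collect powers of $\beta^*$. The coefficients $b_2=1-\epsilon$ and $b_1$ then drop out directly; the only genuinely nontrivial check is that the constant term equals $(p+\mu)(\epsilon\zeta+\mu)(1-\mathcal{R}_0)$, which follows by substituting the reorganized form $\mu N_0\left(\mathcal{R}_A+\mathcal{R}_I\right)=\mathcal{R}_0\,(p+\mu)(\zeta\epsilon+\mu)/\left((\zeta\epsilon+\mu)+p(1-\epsilon)\right)$ read off from \eqref{R0-definition}.

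Finally, for the dichotomy itself I would argue by monotonicity rather than through the quadratic. On $\mathbb{R}_0^+$ the map $F$ is a product of the strictly decreasing factor $\mu N_0\left(\mathcal{R}_A+\mathcal{R}_I\right)/(p+\beta^*+\mu)$ and the non-increasing factor $1+p(1-\epsilon)/\left(\zeta\epsilon+\beta^*(1-\epsilon)+\mu\right)$, both because $1-\epsilon\ge 0$ together with $\mu>0$ keeps every denominator positive and nondecreasing in $\beta^*$; hence $F$ decreases strictly from $F(0)=\mathcal{R}_0$ down to $0$ (the degenerate case $\mathcal{R}_A+\mathcal{R}_I=0$ gives $F\equiv 0$, $\mathcal{R}_0=0$, and is trivial). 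Consequently the equation $F(\beta^*)=1$ has a positive root exactly when $\mathcal{R}_0>1$. This yields statement (1) at once: if $\mathcal{R}_0\le 1$ then $F(\beta^*)<1$ for every $\beta^*>0$, so no positive solution exists and $\beta^*=0$; and statement (2): if $\mathcal{R}_0>1$, then both $\beta^*=0$ and the positive root of $b_2{\beta^*}^2+b_1\beta^*+b_0=0$ are admissible. The main obstacle is thus not conceptual but computational, namely the clean reduction of the integral relation to $F$ and the verification that the cleared-denominator polynomial matches the stated $b_2,b_1,b_0$.
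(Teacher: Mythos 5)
Your proof is correct, and it follows the paper's route for the bulk of the argument: the same substitution of \eqref{stst1;d}--\eqref{stst1;e} into \eqref{stst0;a} yielding the fixed-point relation $\beta^*=\beta^*F{\left(\beta^*\right)}$, and the same clearing of denominators producing the quadratic with exactly the stated $b_2,b_1,b_0$ (your identity $\mu N_0\left(\mathcal{R}_A+\mathcal{R}_I\right)=\mathcal{R}_0\left(p+\mu\right)\left(\zeta\epsilon+\mu\right)/\left(\zeta\epsilon+\mu+p\left(1-\epsilon\right)\right)$ is the right way to see that the constant term is $\left(p+\mu\right)\left(\zeta\epsilon+\mu\right)\left(1-\mathcal{R}_0\right)$). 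Where you genuinely diverge is the final dichotomy: the paper argues through the signs of the coefficients --- $b_2=0$, $b_1>0$ when $\epsilon=1$, and $b_2>0$ otherwise --- and then asserts that a positive root exists iff $b_0<0$ (the paper's ``$b_3<0$'' is a typo for $b_0<0$), whereas you argue that $F$ is continuous and strictly decreasing on $\mathbb{R}_0^+$ with $F{\left(0\right)}=\mathcal{R}_0$ and $F\to 0$ at infinity, so $F{\left(\beta^*\right)}=1$ has a (necessarily unique) positive root iff $\mathcal{R}_0>1$. Your monotonicity argument is arguably the more robust of the two: as stated, the paper's coefficient-sign criterion is incomplete in the case $b_2>0$, since a quadratic with $b_2>0$ and $b_0\geq 0$ can still possess two positive roots when $b_1<0$, so ruling out positive roots under $\mathcal{R}_0\leq 1$ implicitly requires the extra check that $\mathcal{R}_0\leq 1$ forces $b_1>0$ (which does follow from the same identity you invoke); your intermediate-value/monotonicity argument delivers existence, uniqueness, and the nonexistence half of statement (1) in a single stroke, at the modest cost of having to treat the degenerate case $\mathcal{R}_A+\mathcal{R}_I=0$ separately, which you do.
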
 
\begin{proof}
We substitute $a^*$ and $i^*$ of \eqref{stst1;d} and \eqref{stst1;e}, respectively, into \eqref{stst0;a} to deduce that $$\beta^*=\beta^*\frac{\mu N_0}{p+\beta^*+\mu}\left(1+\frac{p\left(1-\epsilon\right)}{\zeta\epsilon+\beta^*\left(1-\epsilon\right)+\mu}\right)\left(\mathcal{R}_A+\mathcal{R}_I\right).$$ There are only two discrete cases, either $\beta^*=0$, or $\beta^*>0$. If $\beta^*>0$, then, equivalently, $$1=\frac{\mu N_0}{p+\beta^*+\mu}\left(1+\frac{p\left(1-\epsilon\right)}{\zeta\epsilon+\beta^*+\left(1-\epsilon\right)+\mu}\right)\left(\mathcal{R}_A+\mathcal{R}_I\right),$$ or else $$b_2{\beta^*}^2+b_1\beta^*+b_0=0.$$ We observe that 
\begin{enumerate}[label=\alph*.]
\item if $\epsilon=1$ then $b_2=0$ and $b_1>0$,
\item if $\epsilon\neq 1$ then $b_2>0$.
\end{enumerate} 
Therefore, in any case, there exists $\beta^*>0$ satisfying the above equation iff $b_3<0$, i.e. $\mathcal{R}_0>1$, and of course, such $\beta^*$ is unique. 
\end{proof}

The following result is now straightforward. 
\begin{corollary}
\label{agedthm2}
Concerning $\left(S^*,V^*,e^*,a^*,i^*\right)$, 
\begin{enumerate}
\item if $\mathcal{R}_0\leq 1$ then $\left(\e^*,a^*,i^*\right)=\left(0,0,0\right)$, 
\item if $\mathcal{R}_0>1$ then 
\begin{enumerate}[label=\roman*.]
\item either $\left(\e^*,a^*,i^*\right)=\left(0,0,0\right)$,
\item or $\left(\e^*,a^*,i^*\right)>\left(0,0,0\right)$. 
\end{enumerate}
\end{enumerate}
\end{corollary}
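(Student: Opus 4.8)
The plan is to read the dichotomy off directly from the explicit steady-state profiles \eqref{stst1;c}--\eqref{stst1;e}, which reduce the whole question to the sign of the single scalar $\beta^*$ already settled in Proposition \ref{agedthm1}. The key structural observation I would make first is that each of $e^*$, $a^*$, $i^*$ factors as $\beta^*$ times a manifestly nonnegative quantity. Indeed, setting $C\coloneqq\frac{\mu N_0}{p+\beta^*+\mu}\left(1+\frac{p\left(1-\epsilon\right)}{\zeta\epsilon+\beta^*\left(1-\epsilon\right)+\mu}\right)$, equation \eqref{stst1;c} reads $e^*{\left(\theta\right)}=\beta^* C\,\e^{-\int_0^\theta k{\left(s\right)}+\mu\,\de s}$, while \eqref{stst1;d} and \eqref{stst1;e} exhibit $a^*$ and $i^*$ as $\beta^* C$ multiplied by products of nonnegative integrals and a strictly positive exponential. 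Thus the three profiles vanish, or fail to vanish, in unison with $\beta^*$.

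With this in hand the two cases are immediate. If $\beta^*=0$, every prefactor above carries the factor $\beta^*=0$, so $\left(e^*,a^*,i^*\right)=\left(0,0,0\right)$ identically. If instead $\beta^*>0$, then nonnegativity of all three components is clear from the factorisation, and to see that the triple is nonzero it suffices to inspect $e^*$: since $\mu\in\mathbb{R}^+$ and, in the regime $\mathcal{R}_0>1$, necessarily $N_0>0$ (otherwise $\mathcal{R}_0=0$ by its definition \eqref{R0-definition}), the constant $C$ is strictly positive while the exponential never vanishes, so $e^*{\left(\theta\right)}>0$ for all $\theta$. Hence $\left(e^*,a^*,i^*\right)>\left(0,0,0\right)$ in the sense of the cone ordering.

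It then remains only to invoke Proposition \ref{agedthm1}: for $\mathcal{R}_0\leq1$ it forces $\beta^*=0$, giving $\left(e^*,a^*,i^*\right)=\left(0,0,0\right)$ by the first case; for $\mathcal{R}_0>1$ its alternative $\beta^*=0$ or $\beta^*>0$ translates verbatim, through the two cases above, into $\left(e^*,a^*,i^*\right)=\left(0,0,0\right)$ or $\left(e^*,a^*,i^*\right)>\left(0,0,0\right)$. The only subtle point---and it is minor---is the meaning of the strict inequality: although $e^*$ is pointwise positive, $a^*$ and $i^*$ need not be, because their prefactors contain integrals such as $\int_0^\infty k{\left(s\right)}q{\left(s\right)}\e^{-\int_0^s k{\left(\tau\right)}+\mu\,\de\tau}\,\de s$ that may degenerate for special supports of $k,q,\chi,\xi$. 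This is exactly why ``$>\left(0,0,0\right)$'' must be understood as positivity in the nonnegative cone---the vector is nonnegative and not identically zero---rather than as strict pointwise positivity of each coordinate.
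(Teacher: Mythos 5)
Your proposal is correct and follows exactly the route the paper intends: the paper gives no explicit proof (it calls the corollary ``straightforward''), and the intended argument is precisely your reading of the factorisation $\left(e^*,a^*,i^*\right)=\beta^*\cdot(\text{nonnegative profiles})$ from \eqref{stst1;c}--\eqref{stst1;e} combined with the $\beta^*$ dichotomy of Proposition \ref{agedthm1}. Your closing remark on interpreting $>\left(0,0,0\right)$ as positivity in the nonnegative cone (since $a^*$ or $i^*$ could degenerate for special supports of $k,q,\chi,\xi$) is a careful observation the paper leaves implicit, and it does not affect correctness.
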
 
The solution $\left(S^*,V^*,e^*,a^*,i^*\right)$ is called \textit{disease-free steady state} if $\left(\e^*,a^*,i^*\right)=\left(0,0,0\right)$, as well as \textit{endemic steady state} if $\left(\e^*,a^*,i^*\right)>\left(0,0,0\right)$. 

\subsubsection{Global stability}
\label{stblty} 

We are interested in the longer-time dynamics of the modeled epidemiological phenomenon, globally with respect to the set of initial data, ${\left(\mathbb{R}_0^+\right)}^2\times{\left(L^1{\left(\mathbb{R}_0^+;\mathbb{R}_0^+\right)}\right)}^3\times\mathbb{R}_0^+$. Below we check the \textit{global stability} of the steady state of $\mathscr{P}$ \eqref{SVEIAR-age-scl} by finding a \textit{Lyapunov function}. Since the steady state changes with respect to the sign of $1-\mathcal{R}_0$, we check each such case separately.
\begin{theorem}
\label{stglbtm1} 
If $\mathcal{R}_0\leq 1$, then the disease-free steady state, $$\left(S^*,V^*,e^*,a^*,i^*\right)=\left(\frac{\mu N_0}{p+\mu},\frac{p\mu N_0}{\left(p+\mu\right)\left(\zeta\epsilon+\mu\right)},0,0,0\right),$$ is globally asymptotically stable. 
\end{theorem}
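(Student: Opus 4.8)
The plan is to exhibit an explicit Lyapunov functional and invoke LaSalle's invariance principle for the semiflow generated by Corollary~\ref{glwlp3}. Writing $g(z)\coloneqq z-1-\ln z\ge 0$ for the Goh--Volterra function and introducing the survival kernels
$$\Pi_e(\theta)=\e^{-\int_0^\theta k(s)+\mu\,\de s},\quad \Pi_a(\theta)=\e^{-\int_0^\theta \gamma_A(s)\xi(s)+\chi(s)(1-\xi(s))+\mu\,\de s},\quad \Pi_i(\theta)=\e^{-\int_0^\theta \gamma_I(s)+\mu\,\de s},$$
I would define the \emph{reproductive-value weights}
$$W_i(\theta)=\int_\theta^\infty \beta_I(s)\frac{\Pi_i(s)}{\Pi_i(\theta)}\,\de s,\qquad W_a(\theta)=\int_\theta^\infty\bigl(\beta_A(s)+\chi(s)(1-\xi(s))W_i(0)\bigr)\frac{\Pi_a(s)}{\Pi_a(\theta)}\,\de s,$$
$$W_e(\theta)=\int_\theta^\infty\bigl(k(s)q(s)W_a(0)+k(s)(1-q(s))W_i(0)\bigr)\frac{\Pi_e(s)}{\Pi_e(\theta)}\,\de s,$$
all finite since $\mu\in\mathbb{R}^+$ forces exponential decay of the kernels. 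The candidate is
$$L=\left(\mathcal{R}_A+\mathcal{R}_I\right)\left(S^*g\!\left(\tfrac{S}{S^*}\right)+V^*g\!\left(\tfrac{V}{V^*}\right)\right)+\int_0^\infty W_e(\theta)e(\cdot,\theta)\,\de\theta+\int_0^\infty W_a(\theta)a(\cdot,\theta)\,\de\theta+\int_0^\infty W_i(\theta)i(\cdot,\theta)\,\de\theta,$$
which is nonnegative and vanishes only at the disease-free steady state.

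Next I would differentiate $L$ along solutions. For the two ODE terms, substituting $\mu N_0=(p+\mu)S^*$, $pS^*=(\zeta\epsilon+\mu)V^*$ and writing $pS-(\zeta\epsilon+\mu)V=pS^*(S/S^*-V/V^*)$, the non-$\beta$ contribution collapses to $(\mathcal{R}_A+\mathcal{R}_I)S^*F(S/S^*,V/V^*)$ with $F(x,y)=-(p+\mu)\frac{(x-1)^2}{x}+\frac{p}{y}(y-1)(x-y)$; an elementary critical-point/AM--GM argument (the unique interior stationary point is $(1,1)$, a maximum, and $F\to-\infty$ on the boundary) shows $F\le 0$ on $(0,\infty)^2$ with equality iff $x=y=1$. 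For the three density terms I would integrate along the characteristics, using the mild formulation of Corollary~\ref{glwlp3}, and integrate by parts in $\theta$; each weight satisfies $W_\bullet'(\theta)-(\text{exit rate})\,W_\bullet(\theta)=-(\text{onward transmission/transition})$, so every interior integral telescopes against the boundary couplings \eqref{grk;c} and \eqref{grk;d}. After the dust settles only the $\theta=0$ boundary terms survive and, using the defining relations, one checks the key identity $W_e(0)=\mathcal{R}_A+\mathcal{R}_I$, leaving $\frac{\de}{\de t}(\text{density part})=\beta\bigl((\mathcal{R}_A+\mathcal{R}_I)(S+(1-\epsilon)V)-1\bigr)$, with $\beta$ as in \eqref{grk;a} and $\varepsilon=\beta(S+(1-\epsilon)V)$.

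Adding the pieces, the coefficient of $\beta$ becomes $-(\mathcal{R}_A+\mathcal{R}_I)(S-S^*)-(\mathcal{R}_A+\mathcal{R}_I)(1-\epsilon)(V-V^*)+(\mathcal{R}_A+\mathcal{R}_I)(S+(1-\epsilon)V)-1$; the $S$- and $V$-dependence cancels, leaving exactly $(\mathcal{R}_A+\mathcal{R}_I)(S^*+(1-\epsilon)V^*)-1=\mathcal{R}_0-1$ by \eqref{R0-definition}. Hence $\dot L=(\mathcal{R}_A+\mathcal{R}_I)S^*F(S/S^*,V/V^*)+\beta(\mathcal{R}_0-1)\le 0$ whenever $\mathcal{R}_0\le 1$. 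I would then apply LaSalle: on the largest invariant subset of $\{\dot L=0\}$ one has $S\equiv S^*$ and $V\equiv V^*$, whence $\dot S=0$ in \eqref{SVEIAR-age-scl;a} forces $\beta\equiv 0$ (this is what disposes of the delicate equality case $\mathcal{R}_0=1$, where $\beta(\mathcal{R}_0-1)$ is uninformative); then $\varepsilon\equiv 0$ and the renewal representation \eqref{volt1;c}--\eqref{volt1;e} drives $e,a,i\to 0$, so the set reduces to the disease-free steady state.

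The main obstacle is twofold. First, guessing the weights so that the telescoping is exact and verifying $W_e(0)=\mathcal{R}_A+\mathcal{R}_I$: this is the engine of the argument and must be matched precisely against the four boundary couplings in \eqref{grk}. Second, and more seriously, making LaSalle rigorous in the $L^1$ setting: I would need relative compactness of the positive orbits (asymptotic smoothness of the semiflow) so that the $\omega$-limit set is nonempty, compact and invariant, together with a justification that the functionals $\int_0^\infty W_\bullet(\theta)(\cdot)\,\de\theta$ are differentiable along mild solutions. Both are standard for age-structured renewal models (cf.\ \cite{iannelli2017basic,inaba2017age,webb1985theory}), but they are where the real work lies; the algebraic reduction to $\dot L\le 0$ is comparatively mechanical once the weights are in hand.
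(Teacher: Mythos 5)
Your proposal is correct and follows essentially the same route as the paper's proof: a Goh--Volterra term $S^*f(S/S^*)+V^*f(V/V^*)$ plus linearly weighted integrals of $e,a,i$, with the weights solving exactly the adjoint ODEs the paper uses in \eqref{fe}--\eqref{fi}, so that the interior terms telescope against the boundary couplings and only $\beta(\mathcal{R}_0-1)$ plus the nonpositive $S$--$V$ term survives; your $W_e,W_a,W_i$ are the paper's $f_E,f_A,f_I$ up to factoring out $S^*+(1-\epsilon)V^*$ (compensated by the prefactor $\mathcal{R}_A+\mathcal{R}_I$ on $L_{SV}$), which merely changes the final dissipation term from $-(1-\mathcal{R}_0)\varepsilon$ to $\beta(\mathcal{R}_0-1)$. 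If anything, your treatment of the LaSalle step (forcing $\beta\equiv 0$ on the invariant set via $S\equiv S^*$, which handles the equality case $\mathcal{R}_0=1$, and flagging the compactness/differentiability issues in the $L^1$ setting) is more careful than the paper's.
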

\begin{proof}
\textit{Step I:}\\
We define the following functions 
\begin{align*}
f\,\colon{\mathbb{R}^+}&\rightarrow{\mathbb{R}_0^+}\\
x&\mapsto f{\left(x\right)}\coloneqq x-1-\ln{x},
\end{align*}
and
\begin{align*}
L\,\colon{\mathbb{R}_0^+}&\rightarrow{\mathbb{R}_0^+}\\
t&\mapsto L{\left(t;S,V,e,a,i\right)}\coloneqq L_{SV}+L_E+L_A+L_I,
\end{align*}
where 
\begin{align}
&L_{SV}\coloneqq S^*f{\left(\frac{S}{S^*}\right)}+V^*f{\left(\frac{V}{V^*}\right)},\\
&L_E\coloneqq\int\limits_0^\infty{f_E{\left(\theta\right)}e{\left(\,\cdot\,,\theta\right)}\,\de\theta}\\
&L_A\coloneqq \int\limits_0^\infty{f_A{\left(\theta\right)}a{\left(\,\cdot\,,\theta\right)}\,\de\theta}\\
&L_I\coloneqq \int\limits_0^\infty{f_I{\left(\theta\right)}i{\left(\,\cdot\,,\theta\right)}\,\de\theta},
\end{align}
and $f_E$, $f_A$ and $f_I$ are left to be defined.

\textit{Step II:}\\
We differentiate $L_{SV}$, $L_E$, $L_A$ and $L_I$. From \eqref{SVEIAR-age-scl;a} and \eqref{SVEIAR-age-scl;b} we get 
\begin{align*}
\dfrac{\de L_{SV}}{\de t}&=\left(1-\frac{S^*}{S}\right)\dfrac{\de S}{\de t}+\left(1-\frac{V^*}{V}\right)\dfrac{\de V}{\de t}=\\
&=\mu S^*\left(2-\frac{S}{S^*}-\frac{S^*}{S}\right)+p S^*\left(3-\frac{V}{V^*}-\frac{S^*}{S}-\frac{SV^*}{S^*V}\right)-\beta\left(S+\left(1-\epsilon\right)V\right)+\beta\left(S^*+\left(1-\epsilon\right)V^*\right).
\end{align*}
With \eqref{volt1;c} at hand, we also calculate  
\begin{align*}
\dfrac{\de L_E}{\de t}&=\dfrac{\de }{\de t}\left(\int\limits_0^t{f_E{\left(\theta\right)}\varepsilon{\left(t-\theta\right)}\e^{-\int\limits_0^\theta{k{\left(s\right)}+\mu\,\de s}}\,\de\theta}+\int\limits_t^\infty{f_E{\left(\theta\right)}e_0{\left(\theta-t\right)}\e^{-\int\limits_0^t{k{\left(\theta-t+s\right)}+\mu\,\de s}}\,\de\theta}\right)=\\
&=f_E{\left(0\right)}\varepsilon+\int\limits_0^\infty{\left(\dfrac{\de f_E}{\de \theta}{\left(\theta\right)}-f_E{\left(\theta\right)}\left(k{\left(\theta\right)}+\mu\right)\right)e{\left(\,\cdot\,,\theta\right)}\,\de\theta}.
\end{align*}
Similarly, from \eqref{volt1;d} and \eqref{volt1;e} we deduce the following expressions 
\begin{equation*}
\dfrac{\de L_A}{\de t}=f_A{\left(0\right)}\alpha+\int\limits_0^\infty{\left(\dfrac{\de f_A}{\de \theta}{\left(\theta\right)}-f_A{\left(\theta\right)}\left(\gamma_A{\left(\theta\right)}\xi{\left(\theta\right)}+\chi{\left(\theta\right)}\left(1-\xi{\left(\theta\right)}\right)+\mu\right)\right)a{\left(\,\cdot\,,\theta\right)}\,\de\theta}  
\end{equation*}
and
\begin{equation*}
\dfrac{\de L_I}{\de t}=f_I{\left(0\right)}\iota+\int\limits_0^\infty{\left(\dfrac{\de f_I}{\de \theta}{\left(\theta\right)}-f_I{\left(\theta\right)}\left(\gamma_I{\left(\theta\right)}+\mu\right)\right)i{\left(\,\cdot\,,\theta\right)}\,\de\theta}.
\end{equation*}
Therefore, we have
\begin{align*}
\dfrac{\de L}{\de t}&=\dfrac{\de L_{SV}}{\de t}+\dfrac{\de L_E}{\de t}+\dfrac{\de L_A}{\de t}+\dfrac{\de L_I}{\de t}=\\
&=-\mu S^*\left(\frac{S}{S^*}+\frac{S^*}{S}-1\right)-p S^*\left(\frac{V}{V^*}+\frac{S^*}{S}+\frac{SV^*}{S^*V}-3\right)-\left(1-f_E{\left(0\right)}\right)\varepsilon+\\
&\phantom{=}+\left(S^*+\left(1-\epsilon\right)V^*\right)\int\limits_0^\infty{\beta_A{\left(\theta\right)}a{\left(\,\cdot\,,\theta\right)}+\beta_I{\left(\theta\right)}i{\left(\,\cdot\,,\theta\right)}\,\de\theta}+\\
&\phantom{=}+\int\limits_0^\infty{\left(\dfrac{\de f_E}{\de \theta}{\left(\theta\right)}-f_E{\left(\theta\right)}\left(k{\left(\theta\right)}+\mu\right)+f_A{\left(0\right)}k{\left(\theta\right)}q{\left(\theta\right)}+f_I{\left(0\right)}k{\left(\theta\right)}\left(1-q{\left(\theta\right)}\right)\right)e{\left(\,\cdot\,,\theta\right)}\,\de\theta}+\\
&\phantom{=}+\int\limits_0^\infty{\left(\dfrac{\de f_A}{\de \theta}{\left(\theta\right)}-f_A{\left(\theta\right)}\left(\gamma_A{\left(\theta\right)}\xi{\left(\theta\right)}+\chi{\left(\theta\right)}\left(1-\xi{\left(\theta\right)}\right)+\mu\right)+f_I{\left(0\right)}\chi{\left(\theta\right)}\left(1-\xi{\left(\theta\right)}\right)\right)a{\left(\,\cdot\,,\theta\right)}\,\de\theta}+\\
&\phantom{=}+\int\limits_0^\infty{\left(\dfrac{\de f_I}{\de \theta}{\left(\theta\right)}-f_I{\left(\theta\right)}\left(\gamma_I{\left(\theta\right)}+\mu\right)\right)i{\left(\,\cdot\,,\theta\right)}\,\de\theta}.
\end{align*}

\textit{Step III:}\\
We choose $f_E$, $f_A$ and $f_I$ such that the latter terms in the last equation to be zero, that is  
\begin{align*}
\dfrac{\de f_E}{\de \theta}&=f_E\left(k+\mu\right)-f_A{\left(0\right)}kq-f_I{\left(0\right)}k\left(1-q\right),\\
\dfrac{\de f_A}{\de \theta}&=f_A\left(\gamma_A\xi+\chi\left(1-\xi\right)+\mu\right)-f_I{\left(0\right)}\chi\left(1-\xi\right)-\left(S^*+\left(1-\epsilon\right)V^*\right)\beta_A,\\
\dfrac{\de f_I}{\de \theta}&=f_I\left(\gamma_I+\mu\right)-\left(S^*+\left(1-\epsilon\right)V^*\right)\beta_I.
\end{align*}
Hence, $\forall\theta\in\mathbb{R}_0^+$, we set 
\begin{align}
f_E{\left(\theta\right)}&\coloneqq f_A{\left(0\right)}\int\limits_0^\infty{k{\left(s\right)}q{\left(s\right)}\e^{-\int\limits_\theta^s{k{\left(\tau\right)}+\mu\,\de\tau}}\,\de s}+f_I{\left(0\right)}\int\limits_0^\infty{k{\left(s\right)}\left(1-q{\left(s\right)}\right)\e^{-\int\limits_\theta^s{k{\left(\tau\right)}+\mu\,\de\tau}}\,\de s},\label{fe}\\
f_A{\left(\theta\right)}&\coloneqq\left(S^*+\left(1-\epsilon\right)V^*\right)\int\limits_\theta^\infty{\beta_A{\left(s\right)}\e^{-\int\limits_\theta^s{\gamma_A{\left(\tau\right)}\xi{\left(\tau\right)}+\chi{\left(\tau\right)}\left(1-\xi{\left(\tau\right)}\right)+\mu\,\de\tau}}\,\de s}+\nonumber\\
&\phantom{\coloneqq}+f_I{\left(0\right)}\int\limits_\theta^\infty{\chi{\left(s\right)}\left(1-\xi{\left(s\right)}\right)\e^{-\int\limits_\theta^s{\gamma_A{\left(\tau\right)}\xi{\left(\tau\right)}+\chi{\left(\tau\right)}\left(1-\xi{\left(\tau\right)}\right)+\mu\,\de\tau}}\,\de s},\label{fa}\\
f_I{\left(\theta\right)}&\coloneqq\left(S^*+\left(1-\epsilon\right)V^*\right)\int\limits_\theta^\infty{\beta_I{\left(s\right)}\e^{-\int\limits_\theta^s{\gamma_I{\left(\tau\right)}+\mu\,\de\tau}}\,\de s}.\label{fi}
\end{align}
For $f_E$, $f_A$ and $f_I$ defined as such we have
\begin{align*}
\dfrac{\de L}{\de t}&=-\mu S^*\left(\frac{S}{S^*}+\frac{S^*}{S}-1\right)-pS^*\left(\frac{V}{V^*}+\frac{S^*}{S}+\frac{SV^*}{S^*V}-3\right)-\left(1-\mathcal{R}_0\right)\varepsilon.
\end{align*}

\textit{Step IV:}\\
Due to the arithmetic-geometric mean inequality, we derive $$\mathcal{R}_0\leq 1\Rightarrow\,\frac{\de L}{\de t}\leq 0,\text{ }\forall t\in\mathbb{R}_0^+$$ and the equality holds only for the disease-free steady state, i.e. when $$\left(S,V,e,a,i\right)=\left(S^*,V^*,e^*,a^*,i^*\right).$$ Hence, the singleton $\left\{\left(S^*,V^*,e^*,a^*,i^*\right)\right\}$ is the largest invariant set for which $$\frac{\de L}{\de t}=0.$$ Then, from the LaSalle in-variance principle it follows that the disease-free steady state is globally asymptotically stable.
\end{proof}

\begin{theorem}
\label{stglbtm2} 
If $\mathcal{R}_0>1$, then the endemic steady state, $$\left(S^*,V^*,e^*,a^*,i^*\right)\neq\left(S^*,V^*,0,0,0\right),$$ is globally asymptotically stable. 
\end{theorem}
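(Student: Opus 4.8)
The plan is to reuse the Lyapunov machinery of Theorem~\ref{stglbtm1}, but now centred on the endemic branch, on which Corollary~\ref{agedthm2} guarantees $\left(e^*,a^*,i^*\right)>\left(0,0,0\right)$. Strict positivity of the age-densities $e^*,a^*,i^*$ is what makes the endemic argument possible: it lets me form the ratios $e/e^*$, $a/a^*$, $i/i^*$ and feed them to the \emph{same} convex function $f{\left(x\right)}=x-1-\ln{x}$ used above, whose defining property $f\ge 0$ with equality iff the argument is $1$ drives the whole estimate. I would therefore keep the susceptible--vaccinated block in its previous form, $L_{SV}=S^*f{\left(S/S^*\right)}+V^*f{\left(V/V^*\right)}$, but replace the infected blocks by the weighted relative-entropy integrals
\[
L_E=\int_0^\infty f_E{\left(\theta\right)}e^*{\left(\theta\right)}f{\left(\frac{e{\left(\,\cdot\,,\theta\right)}}{e^*{\left(\theta\right)}}\right)}\,\de\theta,
\]
and likewise $L_A$, $L_I$ with weights $f_A$, $f_I$, all to be fixed during the computation, setting $L=L_{SV}+L_E+L_A+L_I$.

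First I would differentiate the three blocks along the flow. For $L_{SV}$ I would eliminate $\mu N_0$ and $pS^*$ through the endemic identities $\mu N_0=\left(p+\beta^*+\mu\right)S^*$ and $pS^*=\left(\zeta\epsilon+\beta^*\left(1-\epsilon\right)+\mu\right)V^*$ from \eqref{stst00;a}--\eqref{stst00;b}; this turns $\de L_{SV}/\de t$ into arithmetic--geometric-mean-ready differences together with a force-of-infection residual $\beta\left(S^*+\left(1-\epsilon\right)V^*\right)-\varepsilon$, where $\varepsilon=\beta\left(S+\left(1-\epsilon\right)V\right)$ by \eqref{grk;b}. For the age-structured blocks I would differentiate through the characteristic representation \eqref{volt1;c}--\eqref{volt1;e}, as in Step~II above; since $f'{\left(x\right)}=1-1/x$ and $e^*,a^*,i^*$ solve the stationary transport equations \eqref{stst0ode;c}--\eqref{stst0ode;e}, integrating by parts in $\theta$ produces a boundary contribution at $\theta=0$ --- built from $f_E{\left(0\right)}$, $f_A{\left(0\right)}$, $f_I{\left(0\right)}$ against the renewal fluxes $\varepsilon,\alpha,\iota$ and their starred values --- plus an interior integral carrying $\de f_\bullet/\de\theta$ against the decay rates.

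Next I would fix $f_E$, $f_A$, $f_I$ by requiring the interior integrals to telescope, i.e. by imposing adjoint conditions on the weights analogous to \eqref{fe}--\eqref{fi} (with $S^*,V^*$ now the endemic values), so that $\de L/\de t$ retains only the $S,V$ differences and a package of boundary/renewal terms. The decisive step is then to insert the stationary renewal relations \eqref{stst0;b}--\eqref{stst0;d} for $\varepsilon^*,\alpha^*,\iota^*,\beta^*$ and to regroup every nonlinear cross term into the canonical shape $-c_j\,f{\left(r_j\right)}$, where each $r_j$ is a dimensionless ratio ($S^*/S$, $SV^*/(S^*V)$, and transport ratios such as $\varepsilon\,e^*/(\varepsilon^*e)$ and $a^*/a$ assembled from the boundary data) that equals $1$ exactly at the endemic state. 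Because $\beta^*>0$ on this branch, the divisions required to normalise the ratios are legitimate, which is precisely what failed in the disease-free regime.

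The main obstacle is this final regrouping. Unlike Theorem~\ref{stglbtm1}, where the force of infection collapsed into the single clean term $-\left(1-\mathcal{R}_0\right)\varepsilon$, here each factor $\beta_A a^*$, $\beta_I i^*$, $kqe^*$, $k\left(1-q\right)e^*$ and $\chi\left(1-\xi\right)a^*$ reappears multiplied by a ratio that must be matched against $f_E{\left(0\right)},f_A{\left(0\right)},f_I{\left(0\right)}$ as evaluated from \eqref{fe}--\eqref{fi}; one has to check that the bookkeeping closes so that no stray positive term survives and that the arguments of the various $f$'s combine into products equal to $1$ at equilibrium. Once the derivative is displayed as
\[
\frac{\de L}{\de t}=-\mu S^*\left(\frac{S}{S^*}+\frac{S^*}{S}-2\right)-pS^*\left(\frac{V}{V^*}+\frac{S^*}{S}+\frac{SV^*}{S^*V}-3\right)-\sum_j c_j\,f{\left(r_j\right)},
\]
with all $c_j\ge 0$, the conclusion is immediate: $f\ge 0$ and the arithmetic--geometric-mean inequality give $\de L/\de t\le 0$, with equality confined to $\left(S,V,e,a,i\right)=\left(S^*,V^*,e^*,a^*,i^*\right)$, and LaSalle's invariance principle then yields global asymptotic stability of the endemic steady state.
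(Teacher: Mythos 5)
Your proposal is correct and follows essentially the same route as the paper: the paper's endemic Lyapunov function is exactly your $L_{SV}+\int_0^\infty f_E{\left(\theta\right)}e^*{\left(\theta\right)}f{\left(e/e^*\right)}\de\theta+\cdots$ with the weights \eqref{fe}--\eqref{fi} evaluated at the endemic $S^*,V^*$ (the paper writes the weights as tail integrals $\int_\theta^\infty k q\, e^*\de s$ etc., which coincide with $f_E e^*$, $f_A a^*$, $f_I i^*$ via \eqref{stst00}), differentiated along characteristics and closed by the stationary renewal identities. The bookkeeping you single out as the main obstacle is precisely what the paper's Steps IIb--IId carry out --- the cross terms $D_1,\dots,D_6$ sum to zero and, after inserting zero terms built from $\varepsilon^*=\left(\varepsilon^*/\varepsilon\right)\varepsilon$ and $\alpha^*-\left(\alpha^*/\alpha\right)\alpha$, everything regroups as $-\sum_j c_j f{\left(r_j\right)}$ with ratios such as $S^*/S$, $SV^*/\left(S^*V\right)$, $S\,a\,\varepsilon^*/\left(S^*a^*\varepsilon\right)$, $e\,\alpha^*/\left(e^*\alpha\right)$, $a\,\iota^*/\left(a^*\iota\right)$ --- so your plan closes exactly as anticipated, with the same AM--GM and LaSalle conclusion.
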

\begin{proof}
\textit{Step I:}\\
Based on \eqref{fe}-\eqref{fi}, we now define 
\begin{align*}
L\,\colon{\mathbb{R}_0^+}&\rightarrow{\mathbb{R}_0^+}\\
t&\mapsto L{\left(t;S,V,e,a,i\right)}\coloneqq L_{SV}+L_E+L_A+L_I,
\end{align*}
with  
\begin{align*}
L_{SV}&\coloneqq S^*f{\left(\frac{S}{S^*}\right)}+V^*f{\left(\frac{V}{V^*}\right)},\\
L_E&\coloneqq f_A{\left(0\right)}\int\limits_0^\infty{\int\limits_\theta^\infty{k{\left(s\right)}q{\left(s\right)}e^*{\left(s\right)}\,\de s}\,f{\left(\frac{e{\left(\,\cdot\,,\theta\right)}}{e^*{\left(\theta\right)}}\right)}\,\de \theta}+\\
&\phantom{\coloneqq}+f_I{\left(0\right)}\int\limits_0^\infty{\int\limits_\theta^\infty{k{\left(s\right)}\left(1-q{\left(s\right)}\right)e^*{\left(s\right)}\,\de s}\,f{\left(\frac{e{\left(\,\cdot\,,\theta\right)}}{e^*{\left(\theta\right)}}\right)}\,\de \theta},\\
L_A&\coloneqq \left(S^*+\left(1-\epsilon\right)V^*\right)\int\limits_0^\infty{\int\limits_\theta^\infty{\beta_A{\left(s\right)}a^*{\left(s\right)}\,\de s}\,f{\left(\frac{a{\left(\,\cdot\,,\theta\right)}}{a^*{\left(\theta\right)}}\right)}\,\de \theta}+\\
&\phantom{\coloneqq}+f_I{\left(0\right)}\int\limits_0^\infty{\int\limits_\theta^\infty{\chi{\left(s\right)}\left(1-\xi{\left(s\right)}\right)a^*{\left(s\right)}\,\de s}\,f{\left(\frac{a{\left(\,\cdot\,,\theta\right)}}{a^*{\left(\theta\right)}}\right)}\,\de \theta},\\
L_I&\coloneqq \left(S^*+\left(1-\epsilon\right)V^*\right)\int\limits_0^\infty{\int\limits_\theta^\infty{\beta_I{\left(s\right)}i^*{\left(s\right)}\,\de s}\,f{\left(\frac{i{\left(\,\cdot\,,\theta\right)}}{i^*{\left(\theta\right)}}\right)}\,\de \theta} 
\end{align*}
and 
\begin{align*}
f\,\colon{\mathbb{R}^+}&\rightarrow{\mathbb{R}_0^+}\\
x&\mapsto f{\left(x\right)}\coloneqq x-1-\ln{x}.
\end{align*}
\textit{Step IIa:}\\
We differentiate $L_{SV}$, $L_E$, $L_A$ and $L_I$. From \eqref{SVEIAR-age-scl;a} and \eqref{SVEIAR-age-scl;b} we get 
\begin{align*}
\dfrac{\de L_{SV}}{\de t}&=\left(1-\frac{S^*}{S}\right)\dfrac{\de S}{\de t}+\left(1-\frac{V^*}{V}\right)\dfrac{\de V}{\de t}=\\
&=-\mu S^*\left(\frac{S}{S^*}+\frac{S^*}{S}-2\right)-p S^*\left(\frac{V}{V^*}+\frac{S^*}{S}+\frac{SV^*}{S^*V}-3\right)+\\
&\phantom{=}+S^*{\int\limits_0^\infty{\beta_A{\left(\theta\right)}a^*{\left(\theta\right)}\left(1-\frac{S\,a{\left(\,\cdot\,,\theta\right)}}{S^*a^*{\left(\theta\right)}}-\frac{S^*}{S}+\frac{a{\left(\,\cdot\,,\theta\right)}}{a^*{\left(\theta\right)}}\right)}\,\de \theta}+\\
&\phantom{=}+S^*{\int\limits_0^\infty{\beta_I{\left(\theta\right)}i^*{\left(\theta\right)}\left(1-\frac{S\,i{\left(\,\cdot\,,\theta\right)}}{S^*i^*{\left(\theta\right)}}-\frac{S^*}{S}+\frac{i{\left(\,\cdot\,,\theta\right)}}{i^*{\left(\theta\right)}}\right)}\,\de \theta}+\\
&\phantom{=}+\left(1-\epsilon\right)V^*{\int\limits_0^\infty{\beta_A{\left(\theta\right)}a^*{\left(\theta\right)}\left(-1-\frac{V\,a{\left(\,\cdot\,,\theta\right)}}{V^*a^*{\left(\theta\right)}}+\frac{V}{V^*}+\frac{a{\left(\,\cdot\,,\theta\right)}}{a^*{\left(\theta\right)}}\right)}\,\de \theta}+\\
&\phantom{=}+\left(1-\epsilon\right)V^*{\int\limits_0^\infty{\beta_I{\left(\theta\right)}i^*{\left(\theta\right)}\left(-1-\frac{V\,i{\left(\,\cdot\,,\theta\right)}}{V^*i^*{\left(\theta\right)}}+\frac{V}{V^*}+\frac{i{\left(\,\cdot\,,\theta\right)}}{i^*{\left(\theta\right)}}\right)}\,\de \theta}.
\end{align*}
From the differential equation in \eqref{SVEIAR-age-scl;c} along with \eqref{stst0ode;c} we have
\begin{equation*}
\dfrac{\partial }{\partial t}f{\left(\frac{e{\left(t,\theta\right)}}{e^*{\left(\theta\right)}}\right)}=-\dfrac{\partial }{\partial \theta}f{\left(\frac{e{\left(t,\theta\right)}}{e^*{\left(\theta\right)}}\right)},
\end{equation*}
thus 
\begin{align*}
\dfrac{\de L_{E}}{\de t}&=-f_A{\left(0\right)}\int\limits_0^\infty{\int\limits_\theta^\infty{k{\left(s\right)}q{\left(s\right)}e^*{\left(s\right)}\,\de s}\,\dfrac{\partial }{\partial \theta}f{\left(\frac{e{\left(\,\cdot\,,\theta\right)}}{e^*{\left(\theta\right)}}\right)}\,\de \theta}-\\
&\phantom{=}-f_I{\left(0\right)}\int\limits_0^\infty{\int\limits_\theta^\infty{k{\left(s\right)}\left(1-q{\left(s\right)}\right)e^*{\left(s\right)}\,\de s}\,\dfrac{\partial }{\partial \theta}f{\left(\frac{e{\left(\,\cdot\,,\theta\right)}}{e^*{\left(\theta\right)}}\right)}\,\de \theta}=\\
&=f_A{\left(0\right)}{\int\limits_0^\infty{k{\left(\theta\right)}q{\left(\theta\right)}e^*{\left(\theta\right)}\left(\frac{\varepsilon}{\varepsilon^*}-\frac{e{\left(\,\cdot\,,\theta\right)}}{e^*{\left(\theta\right)}}+\ln{\frac{e{\left(\,\cdot\,,\theta\right)}}{e^*{\left(\theta\right)}}}-\ln{\frac{\varepsilon}{\varepsilon^*}}\right)}\,\de \theta}+\\
&\phantom{=}+f_I{\left(0\right)}{\int\limits_0^\infty{k{\left(\theta\right)}\left(1-q{\left(\theta\right)}\right)e^*{\left(\theta\right)}\left(\frac{\varepsilon}{\varepsilon^*}-\frac{e{\left(\,\cdot\,,\theta\right)}}{e^*{\left(\theta\right)}}+\ln{\frac{e{\left(\,\cdot\,,\theta\right)}}{e^*{\left(\theta\right)}}}-\ln{\frac{\varepsilon}{\varepsilon^*}}\right)}\,\de \theta}.
\end{align*}
Similarly, from \eqref{SVEIAR-age-scl;d} along with \eqref{stst0ode;d}, as well as \eqref{SVEIAR-age-scl;e} along with \eqref{stst0ode;e}, we deduce 
\begin{align*}
\dfrac{\de L_{A}}{\de t}&=
\left(S^*+\left(1-\epsilon\right)V^*\right){\int\limits_0^\infty{\beta_A{\left(\theta\right)}a^*{\left(\theta\right)}\left(\frac{\alpha}{\alpha^*}-\frac{a{\left(\,\cdot\,,\theta\right)}}{a^*{\left(\theta\right)}}+\ln{\frac{a{\left(\,\cdot\,,\theta\right)}}{a^*{\left(\theta\right)}}}-\ln{\frac{\alpha}{\alpha^*}}\right)}\,\de \theta}+\\
&\phantom{=}+f_I{\left(0\right)}{\int\limits_0^\infty{\chi{\left(\theta\right)}\left(1-\xi{\left(\theta\right)}\right)a^*{\left(\theta\right)}\left(\frac{\alpha}{\alpha^*}-\frac{a{\left(\,\cdot\,,\theta\right)}}{a^*{\left(\theta\right)}}+\ln{\frac{a{\left(\,\cdot\,,\theta\right)}}{a^*{\left(\theta\right)}}}-\ln{\frac{\alpha}{\alpha^*}}\right)}\,\de \theta}
\end{align*}
and
\begin{align*}
\dfrac{\de L_{I}}{\de t}&=
\left(S^*+\left(1-\epsilon\right)V^*\right){\int\limits_0^\infty{\beta_I{\left(\theta\right)}i^*{\left(\theta\right)}\left(\frac{\iota}{\iota^*}-\frac{i{\left(\,\cdot\,,\theta\right)}}{i^*{\left(\theta\right)}}+\ln{\frac{i{\left(\,\cdot\,,\theta\right)}}{i^*{\left(\theta\right)}}}-\ln{\frac{\iota}{\iota^*}}\right)}\,\de \theta},
\end{align*}
respectively. Therefore, we have
\begin{align*}
\dfrac{\de L}{\de t}&=\dfrac{\de L_{SV}}{\de t}+\dfrac{\de L_E}{\de t}+\dfrac{\de L_A}{\de t}+\dfrac{\de L_I}{\de t}=\\
&=-\mu S^*\left(\frac{S}{S^*}+\frac{S^*}{S}-2\right)-pS^*\left(\frac{V}{V^*}+\frac{S^*}{S}+\frac{SV^*}{S^*V}-3\right)+\\
&\phantom{=}+S^*{\int\limits_0^\infty{\beta_A{\left(\theta\right)}a^*{\left(\theta\right)}\left(1-\frac{S^*}{S}+\ln{\frac{a{\left(\,\cdot\,,\theta\right)}}{a^*{\left(\theta\right)}}}-\ln{\frac{\alpha}{\alpha^*}}\right)}\,\de \theta}+\\
&\phantom{=}+S^*{\int\limits_0^\infty{\beta_I{\left(\theta\right)}i^*{\left(\theta\right)}\left(1-\frac{S^*}{S}+\ln{\frac{i{\left(\,\cdot\,,\theta\right)}}{i^*{\left(\theta\right)}}}-\ln{\frac{\iota}{\iota^*}}\right)}\,\de \theta}+\\
&\phantom{=}+\left(1-\epsilon\right)V^*{\int\limits_0^\infty{\beta_A{\left(\theta\right)}a^*{\left(\theta\right)}\left(-1+\frac{V}{V^*}+\ln{\frac{a{\left(\,\cdot\,,\theta\right)}}{a^*{\left(\theta\right)}}}-\ln{\frac{\alpha}{\alpha^*}}\right)}\,\de \theta}+\\
&\phantom{=}+\left(1-\epsilon\right)V^*{\int\limits_0^\infty{\beta_I{\left(\theta\right)}i^*{\left(\theta\right)}\left(-1+\frac{V}{V^*}+\ln{\frac{i{\left(\,\cdot\,,\theta\right)}}{i^*{\left(\theta\right)}}}-\ln{\frac{\iota}{\iota^*}}\right)}\,\de \theta}+\\
&\phantom{=}+f_A{\left(0\right)}{\int\limits_0^\infty{k{\left(\theta\right)}q{\left(\theta\right)}e^*{\left(\theta\right)}\left(\ln{\frac{e{\left(\,\cdot\,,\theta\right)}}{e^*{\left(\theta\right)}}}-\ln{\frac{\varepsilon}{\varepsilon^*}}\right)}\,\de \theta}+\\
&\phantom{=}+f_I{\left(0\right)}{\int\limits_0^\infty{k{\left(\theta\right)}\left(1-q{\left(\theta\right)}\right)e^*{\left(\theta\right)}\left(\ln{\frac{e{\left(\,\cdot\,,\theta\right)}}{e^*{\left(\theta\right)}}}-\ln{\frac{\varepsilon}{\varepsilon^*}}\right)}\,\de \theta}+\\
&\phantom{=}+f_I{\left(0\right)}{\int\limits_0^\infty{\chi{\left(\theta\right)}\left(1-\xi{\left(\theta\right)}\right)a^*{\left(\theta\right)}\left(\ln{\frac{a{\left(\,\cdot\,,\theta\right)}}{a^*{\left(\theta\right)}}}-\ln{\frac{\alpha}{\alpha^*}}\right)}\,\de \theta}+\sum\limits_{i=1}^6{D_i},
\end{align*}
where 
\begin{align*}
D_1&\coloneqq\left(S^*+\left(1-\epsilon\right)V^*\right){\int\limits_0^\infty{\beta_A{\left(\theta\right)}a^*{\left(\theta\right)}\frac{\alpha}{\alpha^*}+\beta_I{\left(\theta\right)}i^*{\left(\theta\right)}\frac{\iota}{\iota^*}}\,\de \theta},\\
D_2&\coloneqq-\left(S^*+\left(1-\epsilon\right)V^*\right){\int\limits_0^\infty{\beta_A{\left(\theta\right)}\e^{-\int\limits_0^\theta{\gamma_A{\left(s\right)}\xi{\left(s\right)}+\chi{\left(s\right)}\left(1-\xi{\left(s\right)}\right)+\mu\,\de s}}}\,\de \theta}{\int\limits_0^\infty{k{\left(\theta\right)}q{\left(\theta\right)}e^*{\left(\theta\right)}}\frac{e{\left(\,\cdot\,,\theta\right)}}{e^*{\left(\theta\right)}}\,\de \theta}-\\
&\phantom{\coloneqq}-f_I{\left(0\right)}\left({\int\limits_0^\infty{k{\left(\theta\right)}\left(1-q{\left(\theta\right)}\right)e^*{\left(\theta\right)}{\frac{e{\left(\,\cdot\,,\theta\right)}}{e^*{\left(\theta\right)}}}}\,\de \theta}+{\int\limits_0^\infty{\chi{\left(\theta\right)}\left(1-\xi{\left(\theta\right)}\right)a^*{\left(\theta\right)}{\frac{a{\left(\,\cdot\,,\theta\right)}}{a^*{\left(\theta\right)}}}}\,\de \theta}\right),\\
D_3&\coloneqq-S^*{\int\limits_0^\infty{\beta_A{\left(\theta\right)}a^*{\left(\theta\right)}\frac{S\,a{\left(\,\cdot\,,\theta\right)}}{S^*a^*{\left(\theta\right)}}+\beta_I{\left(\theta\right)}i^*{\left(\theta\right)}\frac{S\,i{\left(\,\cdot\,,\theta\right)}}{S^*i^*{\left(\theta\right)}}}\,\de \theta}-\\
&\phantom{\coloneqq}-\left(1-\epsilon\right)V^*{\int\limits_0^\infty{\beta_A{\left(\theta\right)}a^*{\left(\theta\right)}\frac{V\,a{\left(\,\cdot\,,\theta\right)}}{V^*a^*{\left(\theta\right)}}+\beta_I{\left(\theta\right)}i^*{\left(\theta\right)}\frac{V\,i{\left(\,\cdot\,,\theta\right)}}{V^*i^*{\left(\theta\right)}}}\,\de \theta},\\
D_4&\coloneqq\frac{\varepsilon}{\varepsilon^*}{\int\limits_0^\infty{\left(f_A{\left(0\right)}k{\left(\theta\right)}q{\left(\theta\right)}+f_I{\left(0\right)}k{\left(\theta\right)}\left(1-q{\left(\theta\right)}\right)\right)e^*{\left(\theta\right)}}\,\de \theta},\\
D_5&\coloneqq-f_I{\left(0\right)}{\int\limits_0^\infty{\chi{\left(\theta\right)}\left(1-\xi{\left(\theta\right)}\right)\e^{-\int\limits_0^\theta{\gamma_A{\left(s\right)}\xi{\left(s\right)}+\chi{\left(s\right)}\left(1-\xi{\left(s\right)}\right)+\mu\,\de s}}}\,\de \theta}{\int\limits_0^\infty{k{\left(\theta\right)}q{\left(\theta\right)}e^*{\left(\theta\right)}}\frac{e{\left(\,\cdot\,,\theta\right)}}{e^*{\left(\theta\right)}}\,\de \theta},\\
D_6&\coloneqq f_I{\left(0\right)}{\int\limits_0^\infty{\chi{\left(\theta\right)}\left(1-\xi{\left(\theta\right)}\right)a^*{\left(\theta\right)}{\frac{\alpha}{\alpha^*}}}\,\de \theta}.
\end{align*}
\textit{Step IIb:}\\
From \eqref{grk;c} and \eqref{stst00;d} we see that $$D_5+D_6=0.$$ Moreover, by \eqref{stst0} and \eqref{stst00}, along with \eqref{grk;a} and \eqref{grk;b}, we observe that
\begin{align*}
D_4&=\frac{\varepsilon}{\varepsilon^*}\left(S^*+\left(1-\epsilon\right)V^*\right)\int\limits_0^\infty{\beta_A{\left(\theta\right)}\e^{-\int\limits_0^\theta{\gamma_A{\left(s\right)}\xi{\left(s\right)}+\chi{\left(s\right)}\left(1-\xi{\left(s\right)}\right)+\mu\,\de s}}\,\de\theta}{\int\limits_0^\infty{k{\left(\theta\right)}q{\left(\theta\right)}e^*{\left(\theta\right)}}\,\de \theta}+\\
&\phantom{=}+\frac{\varepsilon}{\varepsilon^*}f_I{\left(0\right)}\left(\int\limits_0^\infty{k{\left(\theta\right)}\left(1-q{\left(\theta\right)}\right)e^*{\left(\theta\right)}+\chi{\left(\theta\right)}\left(1-\xi{\left(\theta\right)}\right)a^*{\left(\theta\right)}\,\de\theta}\right)=\\
&=\frac{\varepsilon}{\varepsilon^*}\left(S^*+\left(1-\epsilon\right)V^*\right)\left(\alpha^*\int\limits_0^\infty{\beta_A{\left(\theta\right)}\e^{-\int\limits_0^\theta{\gamma_A{\left(s\right)}\xi{\left(s\right)}+\chi{\left(s\right)}\left(1-\xi{\left(s\right)}\right)+\mu\,\de s}}\,\de\theta}+\iota^*\int\limits_0^\infty{\beta_I{\left(\theta\right)}\e^{-\int\limits_0^\theta{\gamma_I{\left(s\right)}+\mu\,\de s}}\,\de\theta}\right)=\\
&=\frac{\varepsilon}{\varepsilon^*}\left(S^*+\left(1-\epsilon\right)V^*\right)\beta^*=\frac{\varepsilon}{\varepsilon^*}\varepsilon^*=\beta\left(S+\left(1-\epsilon\right)V\right)=\\
&=\left(S+\left(1-\epsilon\right)V\right)\int\limits_0^\infty{\beta_A{\left(\theta\right)}a{\left(\,\cdot\,,\theta\right)}+\beta_I{\left(\theta\right)}i{\left(\,\cdot\,,\theta\right)}\,\de\theta}=-D_3. 
\end{align*}
Additionally, from \eqref{stst0} and \eqref{stst00} we have that
\begin{align*}
-D_2&=\alpha\left(S^*+\left(1-\epsilon\right)V^*\right){\int\limits_0^\infty{\beta_A{\left(\theta\right)}\e^{-\int\limits_0^\theta{\gamma_A{\left(s\right)}\xi{\left(s\right)}+\chi{\left(s\right)}\left(1-\xi{\left(s\right)}\right)+\mu\,\de s}}}\,\de \theta}+\iota f_I{\left(0\right)}=\\
&=\dfrac{\alpha}{\alpha^*}\alpha^*\left(S^*+\left(1-\epsilon\right)V^*\right){\int\limits_0^\infty{\beta_A{\left(\theta\right)}\e^{-\int\limits_0^\theta{\gamma_A{\left(s\right)}\xi{\left(s\right)}+\chi{\left(s\right)}\left(1-\xi{\left(s\right)}\right)+\mu\,\de s}}}\,\de \theta}+\dfrac{\iota}{\iota^*}\iota^*f_I{\left(0\right)}=\\
&=\dfrac{\alpha}{\alpha^*}\left(S^*+\left(1-\epsilon\right)V^*\right){\int\limits_0^\infty{\beta_A{\left(\theta\right)}\e^{-\int\limits_0^\theta{\gamma_A{\left(s\right)}\xi{\left(s\right)}+\chi{\left(s\right)}\left(1-\xi{\left(s\right)}\right)+\mu\,\de s}}}\,\de \theta}{\int\limits_0^\infty{k{\left(\theta\right)}q{\left(\theta\right)}e^*{\left(\theta\right)}}\,\de \theta}+\\
&\phantom{=}+\dfrac{\iota}{\iota^*}f_I{\left(0\right)}\left({\int\limits_0^\infty{k{\left(\theta\right)}\left(1-q{\left(\theta\right)}\right)e^*{\left(\theta\right)}}\,\de \theta}+{\int\limits_0^\infty{\chi{\left(\theta\right)}\left(1-\xi{\left(\theta\right)}\right)a^*{\left(\theta\right)}}\,\de \theta}\right)=D_1.
\end{align*}
Consequently, $$\sum\limits_{i=1}^6{D_i}=0$$ and with \eqref{stst0ode;b} at hand we deduce that 
\begin{align*}
\dfrac{\de L}{\de t}&=-\mu S^*\left(\frac{S}{S^*}+\frac{S^*}{S}-2\right)-\left(\zeta\epsilon+\beta^*\left(1-\epsilon\right)+\mu\right)V^*\left(\frac{V}{V^*}+\frac{S^*}{S}+\frac{SV^*}{S^*V}-3\right)+\\
&\phantom{=}+S^*{\int\limits_0^\infty{\beta_A{\left(\theta\right)}a^*{\left(\theta\right)}\left(1-\frac{S^*}{S}+\ln{\frac{a{\left(\,\cdot\,,\theta\right)}}{a^*{\left(\theta\right)}}}-\ln{\frac{\alpha}{\alpha^*}}\right)}\,\de \theta}+\\
&\phantom{=}+S^*{\int\limits_0^\infty{\beta_I{\left(\theta\right)}i^*{\left(\theta\right)}\left(1-\frac{S^*}{S}+\ln{\frac{i{\left(\,\cdot\,,\theta\right)}}{i^*{\left(\theta\right)}}}-\ln{\frac{\iota}{\iota^*}}\right)}\,\de \theta}+\\
&\phantom{=}+\left(1-\epsilon\right)V^*{\int\limits_0^\infty{\beta_A{\left(\theta\right)}a^*{\left(\theta\right)}\left(-1+\frac{V}{V^*}+\ln{\frac{a{\left(\,\cdot\,,\theta\right)}}{a^*{\left(\theta\right)}}}-\ln{\frac{\alpha}{\alpha^*}}\right)}\,\de \theta}+\\
&\phantom{=}+\left(1-\epsilon\right)V^*{\int\limits_0^\infty{\beta_I{\left(\theta\right)}i^*{\left(\theta\right)}\left(-1+\frac{V}{V^*}+\ln{\frac{i{\left(\,\cdot\,,\theta\right)}}{i^*{\left(\theta\right)}}}-\ln{\frac{\iota}{\iota^*}}\right)}\,\de \theta}+\\
&\phantom{=}+f_A{\left(0\right)}{\int\limits_0^\infty{k{\left(\theta\right)}q{\left(\theta\right)}e^*{\left(\theta\right)}\left(\ln{\frac{e{\left(\,\cdot\,,\theta\right)}}{e^*{\left(\theta\right)}}}-\ln{\frac{\varepsilon}{\varepsilon^*}}\right)}\,\de \theta}+\\
&\phantom{=}+f_I{\left(0\right)}{\int\limits_0^\infty{k{\left(\theta\right)}\left(1-q{\left(\theta\right)}\right)e^*{\left(\theta\right)}\left(\ln{\frac{e{\left(\,\cdot\,,\theta\right)}}{e^*{\left(\theta\right)}}}-\ln{\frac{\varepsilon}{\varepsilon^*}}\right)}\,\de \theta}+\\
&\phantom{=}+f_I{\left(0\right)}{\int\limits_0^\infty{\chi{\left(\theta\right)}\left(1-\xi{\left(\theta\right)}\right)a^*{\left(\theta\right)}\left(\ln{\frac{a{\left(\,\cdot\,,\theta\right)}}{a^*{\left(\theta\right)}}}-\ln{\frac{\alpha}{\alpha^*}}\right)}\,\de \theta}.
\end{align*}
\textit{Step IIc:}\\ 
We then proceed by adding some useful zero terms in the above equation. First, from \eqref{grk;b}, along with \eqref{stst0;a} and \eqref{stst0;b}, we have the following useful expression for $$\left(S^*+\left(1-\epsilon\right)V^*\right){\int\limits_0^\infty{\beta_A{\left(\theta\right)}a^*{\left(\theta\right)}+\beta_I{\left(\theta\right)}i^*{\left(\theta\right)}}\,\de \theta}=\varepsilon^*$$ as follows 
\begin{align*}
\varepsilon^*&=\dfrac{\varepsilon^*}{\varepsilon}\varepsilon=\dfrac{\varepsilon^*}{\varepsilon}\left(S+\left(1-\epsilon\right)V\right){\int\limits_0^\infty{\beta_A{\left(\theta\right)}a{\left(\,\cdot\,,\theta\right)}+\beta_I{\left(\theta\right)}i{\left(\,\cdot\,,\theta\right)}}\,\de \theta}=\\
&=S^*{\int\limits_0^\infty{\beta_A{\left(\theta\right)}a^*{\left(\theta\right)}\frac{S\,a{\left(\,\cdot\,,\theta\right)}\varepsilon^*}{S^*a^*{\left(\theta\right)}\varepsilon}+\beta_I{\left(\theta\right)}i^*{\left(\theta\right)}\frac{S\,i{\left(\,\cdot\,,\theta\right)}\varepsilon^*}{S^*i^*{\left(\theta\right)}\varepsilon}}\,\de \theta}+\\
&\phantom{=}+\left(1-\epsilon\right)V^*{\int\limits_0^\infty{\beta_A{\left(\theta\right)}a^*{\left(\theta\right)}\frac{V\,a{\left(\,\cdot\,,\theta\right)}\varepsilon^*}{V^*a^*{\left(\theta\right)}\varepsilon}+\beta_I{\left(\theta\right)}i^*{\left(\theta\right)}\frac{V\,i{\left(\,\cdot\,,\theta\right)}\varepsilon^*}{V^*i^*{\left(\theta\right)}\varepsilon}}\,\de \theta}.
\end{align*}
Second, by \eqref{grk;c} and \eqref{grk;d}, along with \eqref{stst0;c} and \eqref{stst0;d}, we have
\begin{align*}
0&=\left(S^*+\left(1-\epsilon\right)V^*\right){\int\limits_0^\infty{\beta_A{\left(\theta\right)}\e^{-\int\limits_0^\theta{\gamma_A{\left(s\right)}\xi{\left(s\right)}+\chi{\left(s\right)}\left(1-\xi{\left(s\right)}\right)+\mu\,\de s}}}\,\de \theta}\left(\alpha^*-\dfrac{\alpha^*}{\alpha}\alpha\right)+f_I{\left(0\right)}\left(\iota^*-\dfrac{\iota^*}{\iota}\iota\right)=\\
&=\left(S^*+\left(1-\epsilon\right)V^*\right){\int\limits_0^\infty{\beta_A{\left(\theta\right)}\e^{-\int\limits_0^\theta{\gamma_A{\left(s\right)}\xi{\left(s\right)}+\chi{\left(s\right)}\left(1-\xi{\left(s\right)}\right)+\mu\,\de s}}}\,\de \theta}{\int\limits_0^\infty{k{\left(\theta\right)}q{\left(\theta\right)}e^*{\left(\theta\right)}}\left(1-\frac{e{\left(\,\cdot\,,\theta\right)}\alpha^*}{e^*{\left(\theta\right)}\alpha}\right)\,\de \theta}+\\
&\phantom{=}+f_I{\left(0\right)}\left({\int\limits_0^\infty{k{\left(\theta\right)}\left(1-q{\left(\theta\right)}\right)e^*{\left(\theta\right)}{\left(1-\frac{e{\left(\,\cdot\,,\theta\right)}\iota^*}{e^*{\left(\theta\right)}\iota}\right)}}\,\de \theta}+{\int\limits_0^\infty{\chi{\left(\theta\right)}\left(1-\xi{\left(\theta\right)}\right)a^*{\left(\theta\right)}{\left(1-\frac{a{\left(\,\cdot\,,\theta\right)}\iota^*}{a^*{\left(\theta\right)}\iota}\right)}}\,\de \theta}\right).
\end{align*}
In view of the above, we now write 
\begin{align*}
\dfrac{\de L}{\de t}&=-\mu S^*\left(\frac{S}{S^*}+\frac{S^*}{S}-2\right)-\left(\zeta\epsilon+\mu\right)V^*\left(\frac{V}{V^*}+\frac{S^*}{S}+\frac{SV^*}{S^*V}-3\right)+\\
&\phantom{=}+\left(S^*+\left(1-\epsilon\right)V^*\right){\int\limits_0^\infty{\left(\beta_A{\left(\theta\right)}a^*{\left(\theta\right)}+\beta_I{\left(\theta\right)}i^*{\left(\theta\right)}\right)\left(1-\frac{S^*}{S}+\ln{\frac{S^*}{S}}\right)}\,\de \theta}+\\
&\phantom{=}+S^*{\int\limits_0^\infty{\beta_A{\left(\theta\right)}a^*{\left(\theta\right)}\left(\ln{\frac{S\,a{\left(\,\cdot\,,\theta\right)}}{S^*a^*{\left(\theta\right)}}}-\ln{\frac{\alpha}{\alpha^*}}+1-\dfrac{S\,a{\left(\,\cdot\,,\theta\right)}\varepsilon^*}{S^*a^*{\left(\theta\right)}\varepsilon}\right)}\,\de \theta}+\\
&\phantom{=}+S^*{\int\limits_0^\infty{\beta_I{\left(\theta\right)}i^*{\left(\theta\right)}\left(\ln{\frac{S\,i{\left(\,\cdot\,,\theta\right)}}{S^*i^*{\left(\theta\right)}}}-\ln{\frac{\iota}{\iota^*}}+1-\dfrac{S\,i{\left(\,\cdot\,,\theta\right)}\varepsilon^*}{S^*i^*{\left(\theta\right)}\varepsilon}\right)}\,\de \theta}+\\
&\phantom{=}+\left(1-\epsilon\right)V^*{\int\limits_0^\infty{\beta_A{\left(\theta\right)}a^*{\left(\theta\right)}\left(1-\dfrac{SV^*}{S^*V}+\ln{\frac{S\,a{\left(\,\cdot\,,\theta\right)}}{S^*a^*{\left(\theta\right)}}}-\ln{\frac{\alpha}{\alpha^*}}+1-\dfrac{V\,a{\left(\,\cdot\,,\theta\right)}\varepsilon^*}{V^*a^*{\left(\theta\right)}\varepsilon}\right)}\,\de \theta}+\\
&\phantom{=}+\left(1-\epsilon\right)V^*{\int\limits_0^\infty{\beta_I{\left(\theta\right)}i^*{\left(\theta\right)}\left(1-\dfrac{SV^*}{S^*V}+\ln{\frac{S\,i{\left(\,\cdot\,,\theta\right)}}{S^*i^*{\left(\theta\right)}}}-\ln{\frac{\iota}{\iota^*}}+1-\dfrac{V\,i{\left(\,\cdot\,,\theta\right)}\varepsilon^*}{V^*i^*{\left(\theta\right)}\varepsilon}\right)}\,\de \theta}+\\
&\phantom{=}+\left(1-\epsilon\right)V^*{\int\limits_0^\infty{\beta_A{\left(\theta\right)}a^*{\left(\theta\right)}\left(\ln{\dfrac{SV^*}{S^*V}}-\ln{\dfrac{SV^*}{S^*V}}+\ln{\dfrac{V\,a{\left(\,\cdot\,,\theta\right)}\varepsilon^*}{V^*a^*{\left(\theta\right)}\varepsilon}}-\ln{\dfrac{V\,a{\left(\,\cdot\,,\theta\right)}\varepsilon^*}{V^*a^*{\left(\theta\right)}\varepsilon}}\right)}\,\de \theta}+\\
&\phantom{=}+\left(1-\epsilon\right)V^*{\int\limits_0^\infty{\beta_I{\left(\theta\right)}i^*{\left(\theta\right)}\left(\ln{\dfrac{SV^*}{S^*V}}-\ln{\dfrac{SV^*}{S^*V}}+\ln{\dfrac{V\,i{\left(\,\cdot\,,\theta\right)}\varepsilon^*}{V^*i^*{\left(\theta\right)}\varepsilon}}-\ln{\dfrac{V\,i{\left(\,\cdot\,,\theta\right)}\varepsilon^*}{V^*i^*{\left(\theta\right)}\varepsilon}}\right)}\,\de \theta}+\\
&\phantom{=}+f_A{\left(0\right)}{\int\limits_0^\infty{k{\left(\theta\right)}q{\left(\theta\right)}e^*{\left(\theta\right)}\left(\ln{\frac{e{\left(\,\cdot\,,\theta\right)}}{e^*{\left(\theta\right)}}}-\ln{\frac{\varepsilon}{\varepsilon^*}}\right)}\,\de \theta}+\\
&\phantom{=}+f_I{\left(0\right)}{\int\limits_0^\infty{k{\left(\theta\right)}\left(1-q{\left(\theta\right)}\right)e^*{\left(\theta\right)}\left(\ln{\frac{e{\left(\,\cdot\,,\theta\right)}}{e^*{\left(\theta\right)}}}-\ln{\frac{\varepsilon}{\varepsilon^*}}\right)}\,\de \theta}+\\
&\phantom{=}+f_I{\left(0\right)}{\int\limits_0^\infty{\chi{\left(\theta\right)}\left(1-\xi{\left(\theta\right)}\right)a^*{\left(\theta\right)}\left(\ln{\frac{a{\left(\,\cdot\,,\theta\right)}}{a^*{\left(\theta\right)}}}-\ln{\frac{\alpha}{\alpha^*}}\right)}\,\de \theta}+\\
&\phantom{=}+\left(S^*+\left(1-\epsilon\right)V^*\right){\int\limits_0^\infty{\beta_A{\left(\theta\right)}\e^{-\int\limits_0^\theta{\gamma_A{\left(s\right)}\xi{\left(s\right)}+\chi{\left(s\right)}\left(1-\xi{\left(s\right)}\right)+\mu\,\de s}}}\,\de \theta}{\int\limits_0^\infty{k{\left(\theta\right)}q{\left(\theta\right)}e^*{\left(\theta\right)}}\left(1-\frac{e{\left(\,\cdot\,,\theta\right)}\alpha^*}{e^*{\left(\theta\right)}\alpha}\right)\,\de \theta}+\\
&\phantom{=}+f_I{\left(0\right)}\left({\int\limits_0^\infty{k{\left(\theta\right)}\left(1-q{\left(\theta\right)}\right)e^*{\left(\theta\right)}{\left(1-\frac{e{\left(\,\cdot\,,\theta\right)}\iota^*}{e^*{\left(\theta\right)}\iota}\right)}}\,\de \theta}+{\int\limits_0^\infty{\chi{\left(\theta\right)}\left(1-\xi{\left(\theta\right)}\right)a^*{\left(\theta\right)}{\left(1-\frac{a{\left(\,\cdot\,,\theta\right)}\iota^*}{a^*{\left(\theta\right)}\iota}\right)}}\,\de \theta}\right). 
\end{align*}
\textit{Step IId:}\\
From the definition of $f$ and the equation 
\begin{align*}
D_1&=\dfrac{\alpha}{\alpha^*}\left(S^*+\left(1-\epsilon\right)V^*\right){\int\limits_0^\infty{\beta_A{\left(\theta\right)}\e^{-\int\limits_0^\theta{\gamma_A{\left(s\right)}\xi{\left(s\right)}+\chi{\left(s\right)}\left(1-\xi{\left(s\right)}\right)+\mu\,\de s}}}\,\de \theta}{\int\limits_0^\infty{k{\left(\theta\right)}q{\left(\theta\right)}e^*{\left(\theta\right)}}\,\de \theta}+\\
&\phantom{=}+\dfrac{\iota}{\iota^*}f_I{\left(0\right)}\left({\int\limits_0^\infty{k{\left(\theta\right)}\left(1-q{\left(\theta\right)}\right)e^*{\left(\theta\right)}}\,\de \theta}+{\int\limits_0^\infty{\chi{\left(\theta\right)}\left(1-\xi{\left(\theta\right)}\right)a^*{\left(\theta\right)}}\,\de \theta}\right),
\end{align*}
the expression can eventually be simplified as follows 
\begin{align*}
\dfrac{\de L}{\de t}&=-\mu S^*\left(\frac{S}{S^*}+\frac{S^*}{S}-2\right)-\left(\zeta\epsilon+\mu\right)V^*\left(\frac{V}{V^*}+\frac{S^*}{S}+\frac{SV^*}{S^*V}-3\right)-\\
&\phantom{=}-\left(S^*+\left(1-\epsilon\right)V^*\right){\int\limits_0^\infty{\left(\beta_A{\left(\theta\right)}a^*{\left(\theta\right)}+\beta_I{\left(\theta\right)}i^*{\left(\theta\right)}\right)f{\left(\frac{S^*}{S}\right)}}\,\de \theta}-\\
&\phantom{=}-S^*{\int\limits_0^\infty{\beta_A{\left(\theta\right)}a^*{\left(\theta\right)}f{\left(\dfrac{S\,a{\left(\,\cdot\,,\theta\right)}\varepsilon^*}{S^*a^*{\left(\theta\right)}\varepsilon}\right)}}\,\de \theta}-S^*{\int\limits_0^\infty{\beta_I{\left(\theta\right)}i^*{\left(\theta\right)}f{\left(\dfrac{S\,i{\left(\,\cdot\,,\theta\right)}\varepsilon^*}{S^*i^*{\left(\theta\right)}\varepsilon}\right)}}\,\de \theta}-\\
&\phantom{=}-\left(1-\epsilon\right)V^*{\int\limits_0^\infty{\beta_A{\left(\theta\right)}a^*{\left(\theta\right)}f{\left(\dfrac{V\,a{\left(\,\cdot\,,\theta\right)}\varepsilon^*}{V^*a^*{\left(\theta\right)}\varepsilon}\right)}}\,\de \theta}-\left(1-\epsilon\right)V^*{\int\limits_0^\infty{\beta_I{\left(\theta\right)}i^*{\left(\theta\right)}f{\left(\dfrac{V\,i{\left(\,\cdot\,,\theta\right)}\varepsilon^*}{V^*i^*{\left(\theta\right)}\varepsilon}\right)}}\,\de \theta}-\\
&\phantom{=}-\left(1-\epsilon\right)V^*{\int\limits_0^\infty{\left(\beta_A{\left(\theta\right)}a^*{\left(\theta\right)}+\beta_I{\left(\theta\right)}i^*{\left(\theta\right)}\right)f{\left(\dfrac{SV^*}{S^*V}\right)}}\,\de \theta}-\\
&\phantom{=}-\left(S^*+\left(1-\epsilon\right)V^*\right){\int\limits_0^\infty{\beta_A{\left(\theta\right)}\e^{-\int\limits_0^\theta{\gamma_A{\left(s\right)}\xi{\left(s\right)}+\chi{\left(s\right)}\left(1-\xi{\left(s\right)}\right)+\mu\,\de s}}}\,\de \theta}{\int\limits_0^\infty{k{\left(\theta\right)}q{\left(\theta\right)}e^*{\left(\theta\right)}}f{\left(\frac{e{\left(\,\cdot\,,\theta\right)}\alpha^*}{e^*{\left(\theta\right)}\alpha}\right)}\,\de \theta}-\\
&\phantom{=}-f_I{\left(0\right)}{\int\limits_0^\infty{k{\left(\theta\right)}\left(1-q{\left(\theta\right)}\right)e^*{\left(\theta\right)}}f{\left(\frac{e{\left(\,\cdot\,,\theta\right)}\iota^*}{e^*{\left(\theta\right)}\iota}\right)}\,\de \theta}-f_I{\left(0\right)}{\int\limits_0^\infty{\chi{\left(\theta\right)}\left(1-\xi{\left(\theta\right)}\right)a^*{\left(\theta\right)}}f{\left(\frac{a{\left(\,\cdot\,,\theta\right)}\iota^*}{a^*{\left(\theta\right)}\iota}\right)}\,\de \theta}.
\end{align*}
\textit{Step III:}\\
Employing the arithmetic-geometric mean inequality, we get $$\mathcal{R}_0\leq 1\Rightarrow\,\frac{\de L}{\de t}\leq 0,\text{ }\forall t\in\mathbb{R}_0^+$$ and the equality holds only for the endemic steady state, i.e. when 
$$\left(S,V,e,a,i\right)=\left(S^*,V^*,e^*,a^*,i^*\right).$$ 
Hence, the singleton $\left\{\left(S^*,V^*,e^*,a^*,i^*\right)\right\}$ is the largest invariant set for which $$\frac{\de L}{\de t}=0.$$ Then, from the LaSalle in-variance principle it follows that the endemic steady state is globally asymptotically stable.
\end{proof}

\section{Numerical simulations}
\label{sec:numerics}

In this section, we numerically solve $\mathscr{P}$ \eqref{SVEIAR-age-scl} in order to verify the validity of the analysis performed in \hyperref[sec:derivanal]{\S \ref*{sec:derivanal}} and to further investigate the behavior of $\mathscr{P}$ \eqref{SVEIAR-age-scl}.

\subsection{Numerical scheme}
Here, we present the temporal discretization used to numerically solve $\mathscr{P}$ and the code used to implement it.

\subsubsection{Temporal discretization}

We assume that the maximum age of the population, $\theta_{\dagger}$, is equal to $90 \cdot 360$ days. Furthermore, we study $\mathscr{P}$ \eqref{SVEIAR-age-scl} for a time of up to 1500 days. Hence, we solve $\mathscr{P}$ \eqref{SVEIAR-age-scl} in the interval $\left(t, \theta \right) \in \left[0,\; 1500 \right] \times \left[0,\; 90 \cdot 360 \right] \; \cdot $ days. The time-age step we chose is $h = 0.05$. Let $\mathcal{N}$ be the number of time-age steps needed to reach the maximum age, i.e $\theta_\dagger$, and $\mathcal{J}$ be the number of time-age steps needed to reach the maximum time, i.e 1500 days.

To discretize the time derivative, we use the following first-order forward difference scheme:
\begin{equation*}
    \frac\partial{\partial t}\left( u(t^n) \right) = \lim_{h\to0^+} \frac{u(t^n+h)-u(t^n)}h \approx\frac{u^{n+1}-u^n}h, \quad 0 \le n \le \mathcal{N}-1 \;,
\end{equation*}
for $u\in\,\left\{S(t), \; V(t) \,\big|\, t \in  \left[0,\; 1500 \right]\; \cdot \text{ days} \right\}$.

To discretize the temporal directional derivative, we use the following first-order approximation:
\begin{equation*}
   \left( \frac\partial{\partial t}+\frac\partial{\partial \theta} \right) \left( u(t^n, \theta_j) \right) =\lim_{h\to0^+}\frac{u(t^n+h,\theta_j+h)-u(t^n,\theta_j)}h\approx\frac{u_{j+1}^{n+1}-u_j^n}h, \quad 0 \le n \le \mathcal{N}-1, \; 0 \le j \le \mathcal{J}-1 \;,
\end{equation*}
for $u\in\,\left\{e{\left(t,\,\theta\,\right)},a{\left(t,\,\theta\,\right)},i{\left(t,\,\theta\,\right)}\,\big|\,\left(t,\; \theta \right)\in  \left[0,\; 1500 \right] \times \left[0,\; 90 \cdot 360 \right] \; \cdot \text{ days} \right\} \;.$

To discretize the integrals, we use the following quadrature formula:
\begin{equation*}
    \int\limits_0^\infty{g{\left(\theta\right)}u{\left(\,t^n\,,\theta\right)}\,\de\theta} \approx h\sum\limits_{j=0}^{\mathcal{J}}{g{\left(\theta_j\right)}u{\left(\,t^n\,,\theta_j\right)}} = 
    h\sum\limits_{j=0}^{\mathcal{J}}{g_j u^n_j} , \quad 0 \le n \le \mathcal{N}-1 \;,
\end{equation*}
for 
\begin{multline*}
    \left(u,g\right)\in\,\left\{e{\left(t,\,\theta\,\right)},a{\left(t,\,\theta\,\right)},i{\left(t,\,\theta\,\right)}\,\big|\,\left(t,\; \theta \right)\in  \left[0,\; 1500 \right] \times \left[0,\; 90 \cdot 360 \right] \; \cdot \text{ days} \right\}\times \\
   \times \left\{\beta_A{\left(\theta\right)},\beta_I{\left(\theta\right)},k{\left(\theta\right)},q{\left(\theta\right)},\gamma_A{\left(\theta\right)},\xi{\left(\theta\right)},\chi{\left(\theta\right)},\gamma_I{\left(\theta\right)} \,\big|\, \theta \in  \left[0,\; 90 \cdot 360 \right] \; \cdot \text{ days}\right\} \;.
\end{multline*}

\subsubsection{Code implementation}

To implement the aforementioned discretization schemes, we use \pkg{Julia (v1.8.5)} \cite{Julia}. The code can be found at \url{https://github.com/TsilidisV/age-structured-SVeaiR-model}. To plot the numerical solution of $\mathscr{P}$ \eqref{SVEIAR-age-scl}, we use \pkg{Makie.jl} \cite{DanischKrumbiegel2021}. To save and load the results, we use \pkg{JLD2.jl} and \pkg{CodecZlib.jl}. To calculate $\mathcal{R}_0$, we use \pkg{QuadGK.jl} \cite{quadgk} and \pkg{Integrals.jl} \cite{DifferentialEquations.jl-2017}. To create faster \pkg{Julia} structs for the parameters and initial conditions, we use \pkg{FunctionWrappers.jl}. Finally, we use \pkg{Dierckx.jl} to interpolate, as well as \pkg{CSV.jl} and \pkg{DataFrames.jl} \cite{dataframes.jl} to load the data for the parameter values.

\subsection{Parameter values}
Here, we give a description of the parameter values chosen to represent the case of SARS-CoV-2.  A summary of the parameter values, can be found in \hyperref[tab:paramValues]{Table \ref*{tab:paramValues}}. 

\begin{table}[ht]
\caption{A list of parameters of $\mathscr{P}$ \eqref{SVEIAR-age-scl}, along with their value, units and value source}
\centering
\begin{tabular}{@{}cccc@{}}
\toprule
Parameters & Value                   & Units                         & Source \\ \midrule
$N_0$      & $80 \cdot 10^6$           & individuals                   & Estimated from \cite{ourworldindata}       \\
$\mu$      & $4.38356 \cdot 10^{-5}$ & day $^{-1}$                   & Estimated from \cite{ourworldindata}       \\
$\beta_A$  &  \hyperref[fig:2]{Figure \ref*{fig:2}}                      & individual$^{-1}\, \cdot \, $day $^{-1}$ & Estimated from \cite{DelValle_Hyman_Hethcote_Eubank_2007}       \\
$\beta_I$  &  \hyperref[fig:2]{Figure \ref*{fig:2}}                       & individual$^{-1}\, \cdot \, $day$^{-1}$ & Estimated from \cite{DelValle_Hyman_Hethcote_Eubank_2007}       \\
$p$        & $10^{-3}$               & day$^{-1}$                    & Estimated from \cite{ourworldindata}      \\
$\epsilon$ & 0.7                     & -                             & Estimated from \cite{grant2022impact}      \\
$\zeta$    & $\frac{1}{14}$                    & day$^{-1}$                    & Estimated from \cite{chau2022immunogenicity}      \\
$k$        &  \hyperref[eq:valuek]{Equation \ref*{eq:valuek}}                        & day$^{-1}$                    &  Estimated from \cite{kang2022transmission,wu2022incubation}      \\
$q$        &   \hyperref[fig:3]{Figure \ref*{fig:3}}                       & -                             &  Estimated from \cite{sah2021asymptomatic}      \\
$\xi$      & 0.5                     & -                             & Estimated from \cite{he2021proportion,buitrago2022occurrence}       \\
$\chi$     & \hyperref[eq:valuechi]{Equation \ref*{eq:valuechi}}                         & day$^{-1}$                    & Estimated from \cite{he2021proportion,buitrago2022occurrence}       \\
$\gamma_A$ & $\frac{1}{8}$                    & day$^{-1}$                    & Estimated from \cite{byrne2020inferred}      \\
$\gamma_I$ & $\frac{1}{14}$                    & day$^{-1}$                    & Estimated from \cite{byrne2020inferred}      \\ \bottomrule
\end{tabular}
\label{tab:paramValues}
\end{table}

\begin{itemize}
    \item $N_0 = 80 \cdot 10^6$, the size of the population, is chosen to be that of a relative large country \cite{ourworldindata}.
    \item $\mu = 4.38356 \cdot 10^{-5} \; \text{day}^{-1}$, the birth/death rate, is converted from the average birth/death rate of the world for the year 2021, 16 per 1000 individuals per year, found in \cite{ourworldindata}.
    \item $\beta_A$ and $\beta_I$ are functions of age and are estimated from \cite{DelValle_Hyman_Hethcote_Eubank_2007}. As can be seen from Fig. 2 of \cite{DelValle_Hyman_Hethcote_Eubank_2007}, the average contacts an individual makes each day regardless of their epidemiological status is about 16.71 contacts per day. In order to examine the effect of age in the dynamics of $\mathscr{P}$ \eqref{SVEIAR-age-scl}, we assume the following two functions to
respectively model two extreme cases of the average number of contacts an individual makes:
    \begin{subequations} \label{eq:contac_funcs}
        \begin{align}
                c_1(\theta) &= \frac{16.71}{0.38}\exp\left(\left(\frac{\theta-80\omega}{10^4}\right)^2\right),\quad \theta \in \left[0,\; 90 \cdot 360 \right] \; \cdot \; \text{days} \label{eq:contac_funcs1}\\
                c_2(\theta) &= \frac{16.71}{0.38}\exp\left(\left(\frac{\theta-10\omega}{10^4}\right)^2\right),\quad \theta \in \left[0,\; 90 \cdot 360 \right] \; \cdot \; \text{days} \;.\label{eq:contac_funcs2}
        \end{align}

    \end{subequations}
    Both $c_1$ and $c_2$ have the same mean value of 16.71 contacts per day in the interval $\left[0,\; 90 \cdot 360 \right] \; \cdot \; \text{days}$. We additionally assume that the probability of an exposed individual passing to the compartments of asymptomatic and symptomatic individuals to be $\varpi_{E\to A} = \frac{1}{5}$ and $\varpi_{E\to I} = \frac{2}{5}$, respectively. Finally, assuming the transmission rates to be defined as $\beta_{A_i} = \frac{c_i \cdot \varpi_{E\to A}}{N_0}$ and $ \beta_{I_i}= \frac{c_i \cdot \varpi_{E\to I}}{N_0},$ for $i = 1,2$, we get \hyperref[fig:2]{Figure \ref*{fig:2}}.

    \begin{figure}[H]
    \centering
    \includegraphics[width=1\textwidth]{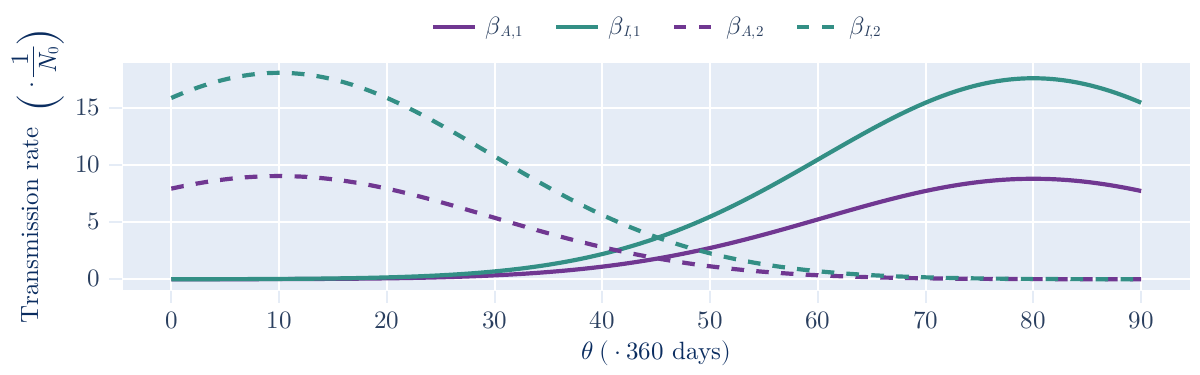}
    \caption{Two extreme types of transmission rates for the symptomatic infectious and asymptomatic infectious individuals. The transmission rates corresponding to the contact function of $c_1$ peak at individuals of 10 years of age, whereas the transmission rates corresponding to the contact function of $c_2$ peak at individuals of 80 years of age.}
    \label{fig:2}
    \end{figure} 
    
    \item $p = 10^{-3} \; \text{day}^{-1}$, the vaccination rate, is assumed to be that during the summer of 2021 in the USA \cite{ourworldindata}.
    \item $\epsilon = 0.7$, the vaccine effectiveness, is assumed to be an average effectiveness of the BNT162b2 and ChAdOx1 nCoV-19 vaccine \cite{grant2022impact}.
    \item $\zeta = \frac{1}{14}$, the vaccine-induced immunity rate, is taken from \cite{chau2022immunogenicity}.
    \item $k$, the latent rate, is a function of age and is taken by assuming that the latent and incubation period differ by one day \cite{kang2022transmission,wu2022incubation}. It is given by \begin{equation} \label{eq:valuek}
                k(\theta) = \begin{cases}
                \frac{1}{4}   \; \text{day}^{-1},   & \theta < 30 \cdot 360 \\
                \frac{1}{4.8} \; \text{day}^{-1},   & 30 \cdot 360 \le \theta < 40 \cdot 360 \\
                \frac{1}{4.8} \; \text{day}^{-1},   & 40 \cdot 360 \le \theta < 50 \cdot 360\\
                \frac{1}{5.5} \; \text{day}^{-1},   & 50 \cdot 360 \le \theta < 60 \cdot 360\\
                \frac{1}{3.1} \; \text{day}^{-1},   & 60 \cdot 360 \le \theta < 70 \cdot 360\\
                \frac{1}{6}   \; \text{day}^{-1},   & 70 \cdot 360 \le \theta \; .
                \end{cases}
         \end{equation}
    \item $q$, the proportion of the latent/exposed individuals becoming asymptomatic
infectious is taken from \cite{sah2021asymptomatic} and can be seen in \hyperref[fig:3]{Figure \ref*{fig:3}}. To digitise the data from \cite{sah2021asymptomatic}, we use \pkg{WebPlotDigitizer 4.6} \cite{Rohatgi2022}. 
    \begin{figure}[H]
    \centering
    \includegraphics[width=1\textwidth]{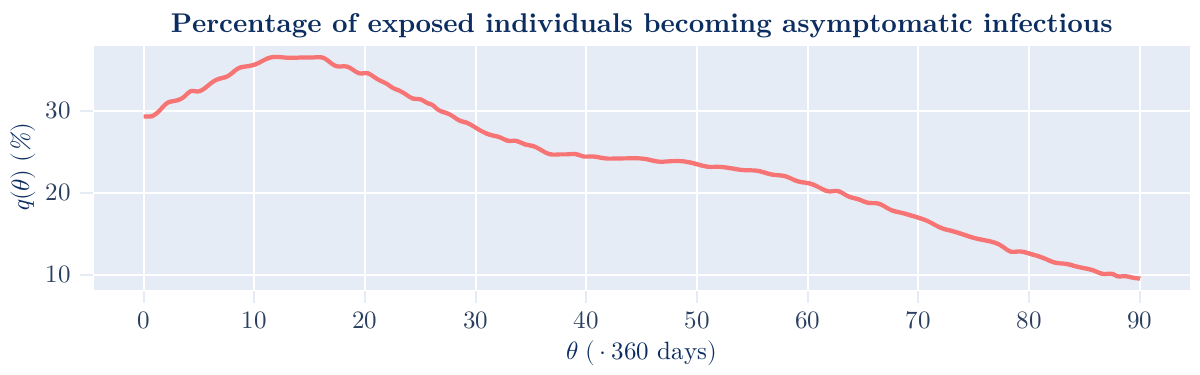}
    \caption{Two types of transmission rates for the symptomatic infectious and asymptomatic infectious individuals. The transmission rates corresponding to the contact function of $c_1$ peak at individuals of 10 years of age, whereas the transmission rates corresponding to the contact function of $c_2$ peak at individuals of 80 years of age.}
    \label{fig:3}
    \end{figure} 

    \item $\xi = 0.5$, the proportion of the asymptomatic infectious individuals becoming recovered/removed without developing any symptoms, is estimated from \cite{he2021proportion,buitrago2022occurrence}.
    \item $\chi$, the incubation rate, is a function of age and is taken form data from \cite{tan2020does}. It is given by
    \begin{equation}  \label{eq:valuechi}
    \chi(\theta) = \begin{cases}
    \frac{1}{5}   \; \text{day}^{-1},   & \theta < 30 \cdot 360\\
    \frac{1}{5.8} \; \text{day}^{-1},   & 30 \cdot 360 \le \theta < 40 \cdot 360\\
    \frac{1}{5.8} \; \text{day}^{-1},   & 40 \cdot 360 \le \theta < 50 \cdot 360\\
    \frac{1}{6.5} \; \text{day}^{-1},   & 50 \cdot 360 \le \theta < 60 \cdot 360\\
    \frac{1}{4.1} \; \text{day}^{-1},   & 60 \cdot 360 \le \theta < 70 \cdot 360\\
    \frac{1}{7}   \; \text{day}^{-1},   & 70 \cdot 360 \le \theta \; .
    \end{cases}
    \end{equation}
     \item $\gamma_A = \frac{1}{8} \; \text{day}^{-1}$, the recovery rate of the asymptomatic infectious individuals, is a function of age, but it is taken as a constant due to lack of available data. It is estimated from \cite{byrne2020inferred}.
     \item $\gamma_I = \frac{1}{14} \; \text{day}^{-1}$, the recovery rate of the symptomatic infectious individuals, is a function of age, but it is taken as a constant due to lack of available data. It is estimated from \cite{byrne2020inferred}.
\end{itemize}

\subsection{Results}

Throughout our simulations we assume that $S_0=V_0=2\cdot 10^7$ individuals. In order to study $\mathscr{P}$ \eqref{SVEIAR-age-scl} in a global scale, we vary the rest of the initial conditions. In particular, we assume that $E_0 = A_0 = I_0 = d$ and let $d$ take the values of $10, \; 10^4, \; 10^6, \; 4 \cdot 10^6, \; 10^7$.

\subsubsection{The case of \texorpdfstring{$\mathcal{R}_0 \leq 1$}{R0<=1}} 

Here, we assume the average number of contacts of each individual, $c$, to be as in \eqref{eq:contac_funcs1}, i.e. $c=c_1$. In such a case, $\mathcal{R}_0=5.95 \cdot 10^{-5}$. As we see in \hyperref[fig:figure4]{Figure \ref*{fig:figure4}}, for every initial condition we have that $\left( E, \; A,\; I\right) \to \left( 0, \; 0,\; 0\right)$, as $t \to \infty$. This confirms the global stability analysis performed in \hyperref[sec:derivanal]{\S \ref*{sec:derivanal}}, since the solutions converge to the disease-free steady state for every initial condition when $\mathcal{R}_0 \leq 1$.

\begin{figure}[H]
\centering
\includegraphics[width=1\textwidth]{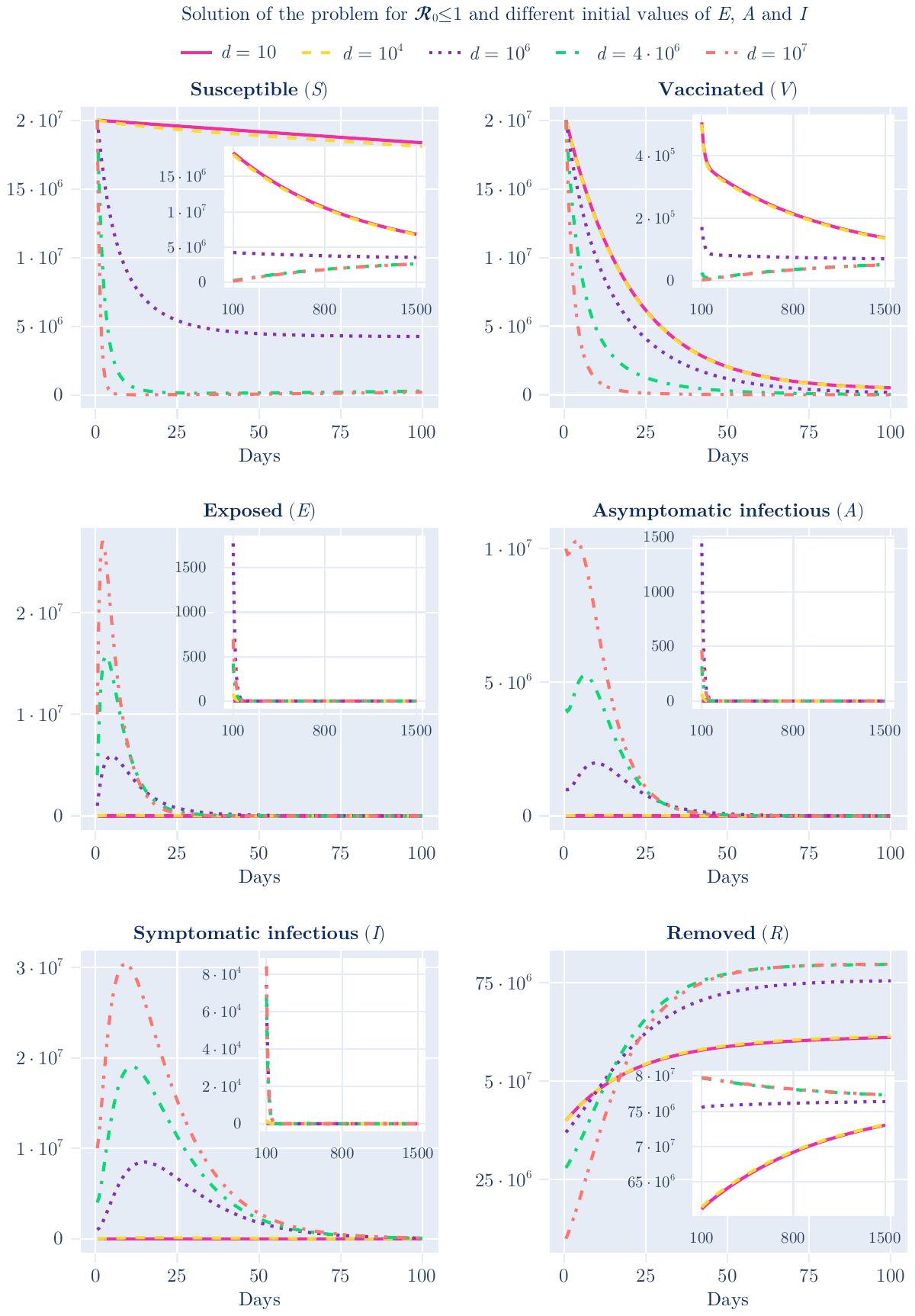}
\caption{Solution of $\mathscr{P}$ \eqref{SVEIAR-age-scl} for $t \in \left[0, 1500 \right] \cdot $ days. The time in each large diagram is in the range $\left[0, 100 \right] \cdot $ days, whereas in each inserted small diagram in the range of $\left[100, 1500 \right] \cdot $ days. The parameter values are as in \hyperref[tab:paramValues]{Table \ref*{tab:paramValues}}, with $c = c_1$, and the initial conditions for $S$ and $V$ are  $S_0=V_0=2\cdot 10^7$. The time-related initial conditions $E_0, \; A_0$ and $I_0$ for $E, \; A$ and $I$, respectively, are all equal, i.e $E_0 = A_0 = I_0 = d$. The value of $d$ takes the values of $10, 10^4, 10^6, 4 \cdot 10^6$ and $10^7$. We see that for all initial conditions the solutions converge towards the disease-free steady state, since $\left( E, \; A,\; I\right) \to \left( 0, \; 0,\; 0\right)$, as $t \to \infty$. Hence, the global stability of the disease-free steady state for $\mathcal{R}_0 \le 1$ is numerically confirmed.}
\label{fig:figure4}
\end{figure}

\subsubsection{The case of \texorpdfstring{$\mathcal{R}_0>1$}{R0>1}} 

Here, we assume the average number of contacts of each individual, $c$, to be as in \eqref{eq:contac_funcs2}, i.e. $c=c_2$. In such a case, $\mathcal{R}_0=9.14$. As we see in \hyperref[fig:figure5]{Figure \ref*{fig:figure5}}, for every initial condition we have that  $\left(E,\;A,\;I\right)$ converges to a nonzero value, as $t \to \infty$. This confirms the global stability analysis performed in \hyperref[sec:derivanal]{\S \ref*{sec:derivanal}}, since the solutions converge, in an oscillatory way, to the endemic steady state for every initial condition when $\mathcal{R}_0>1$.

\begin{figure}[H]
\centering
\includegraphics[width=1\textwidth]{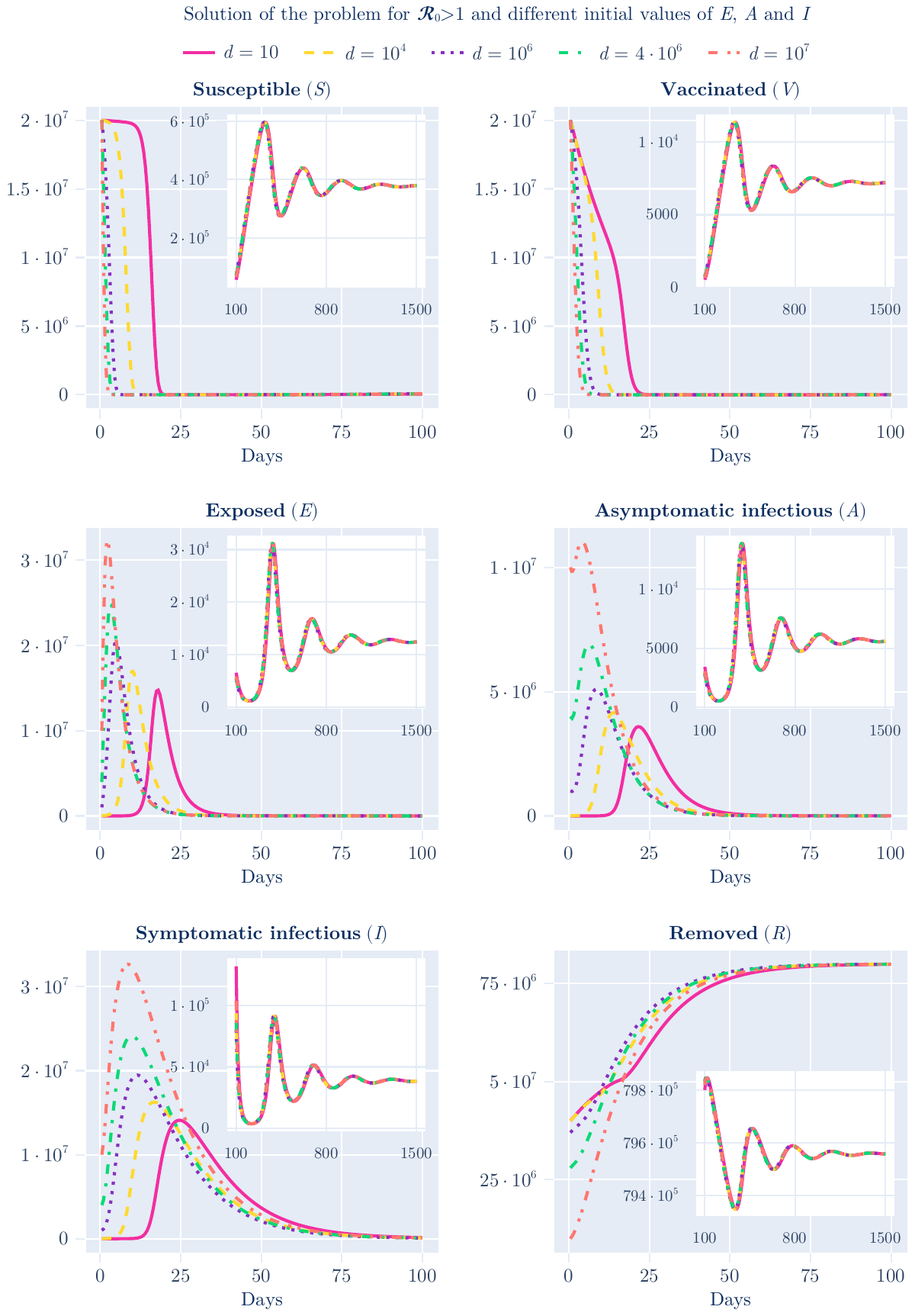}
\caption{Solution of $\mathscr{P}$ \eqref{SVEIAR-age-scl} for $t \in \left[0, 1500 \right] \cdot $ days. The time in each large diagram is in the range $\left[0, 100 \right] \cdot $ days, whereas in each inserted small diagram in the range of $\left[100, 1500 \right] \cdot $ days. The parameter values are as in \hyperref[tab:paramValues]{Table \ref*{tab:paramValues}}, with $c=c_2$, and the initial conditions for $S$ and $V$ are  $S_0=V_0=2\cdot 10^7$. The time-related initial conditions $E_0, \; A_0$ and $I_0$ for $E, \; A$ and $I$, respectively, are all equal, i.e $E_0 = A_0 = I_0 = d$. The value of $d$ takes the values of $10,\; 10^4,\; 10^6, 4 \cdot 10^6$ and $10^7$. We see that for all initial conditions the solutions converge towards the endemic steady state, since $\left(E,\;A,\;I\right)$ converges to a nonzero value, as $t \to \infty$. Interestingly, the convergence to the endemic steady state is oscillatory. Hence, the global stability of the endemic steady state for $\mathcal{R}_0 > 1$ is numerically confirmed.}
\label{fig:figure5}
\end{figure} 


\section{Conclusions and discussion}
\label{sec:CD}
In this paper, we derived an age-structured epidemiological compartment problem 
and we studied it in terms of global well-posedness and stability analysis. From this analysis we deduced the basic reproductive number, $\mathcal{R}_0$, of the model, a critical measurement of the transmission potential of a disease. 

The model presented in this paper focused on the age structure of a population. A straightforward generalization includes the consideration of more independent variables, such as a spatial one. Moreover, it would be essential to include additional, potentially important factors of the evolution of the epidemiological phenomenon, such as waning immunity gained by both infected and vaccinated individuals.

 \bibliographystyle{plain}
\bibliography{mybibfile}\label{bibliography}
\addcontentsline{toc}{chapter}{Bibliography}

\end{document}